\newcommand{\beql}[1]{\begin{equation}\label{#1}}
\newcommand{\eeq}{\end{equation}}
\newcommand{\comment}[1]{}
\newcommand{\eref}[1]{{\rm (\ref{#1})}}
\newcommand{\Abs}[1]{{\left|{#1}\right|}}
\newcommand{\Norm}[1]{{\left\|{#1}\right\|}}
\newcommand{\Set}[1]{{\left\{{#1}\right\}}}
\newcommand{\RR}{{\mathbb R}}
\newcommand{\CC}{{\mathbb C}}
\newcommand{\ZZ}{{\mathbb Z}}
\newcommand{\NN}{{\mathbb N}}
\newcommand{\TT}{{\mathbb T}}
\newcommand{\BB}{{\mathcal B}}
\newcommand{\ve}{{\varepsilon}}
\newcommand{\inner}[2]{{\left\langle #1, #2 \right\rangle}}
\newcommand{\dens}{{\rm dens\,}}
\newcommand{\supp}{{\rm supp\,}}
\newcommand{\dist}{{\rm dist\,}}
\newcommand{\ft}[1]{\widehat{#1}}
\newcounter{open}
\newcounter{dfn}
\def\thedfn{\arabic{dfn}}
\newcounter{obs}
\def\theobs{\arabic{obs}}
\newcounter{thm}
\newcounter{othm}
\def\theothm{\Alph{othm}}
\newcounter{mysec}
\newcounter{mysubsec}[mysec]
\newtheorem{theorem}{Theorem}
\newtheorem{corollary}{Corollary}
\newtheorem{lemma}{Lemma}
\newtheorem{proposition}{Proposition}
\newtheorem{definition}{Definition}
\newtheorem{conjecture}{Conjecture}
\newtheorem{problem}{Problem}
\theoremstyle{definition}
\newtheorem{example}{Example}
\newtheorem{remark}{Remark}
\newcommand{\Tu}{{\mathcal T}}
\newcommand{\Om}{{\Omega}}
\newcommand{\FF}{{\mathcal F}}
\newcommand{\e}{\varepsilon}
\newcounter{rev}
\newcounter{rem}
\begin{document}

\title{Tur\'an's extremal problem on locally compact abelian groups}

\author{Szil\' ard Gy. R\' ev\' esz\thanks{Supported in part by the Hungarian National
Foundation for Scientific Research, Project \#s T-49301, K-61908
and K-72731.}}

\let\oldfootnote\thefootnote
\def\thefootnote{}

\let\thefootnote\oldfootnote

\maketitle

{\small \tableofcontents}

\newpage

\begin{abstract}
Let $G$ be a locally compact abelian group (LCA group) and
$\Omega$ be an open, 0-symmetric set. Let $\FF:=\FF(\Omega)$ be
the set of all continuous functions $f:G\to\RR$ which are
supported in $\Omega$ and are positive definite. The Tur\'an
constant of $\Omega$ is then defined as $\Tu(\Om):=\sup\{
\int_{\Om}f~:~f\in\FF(\Om), f(0)=1\}$.

Mihalis Kolountzakis and the author has shown that structural
properties -- like spectrality, tiling or packing with a certain
set $\Lambda$ -- of subsets $\Om$ in finite, compact or Euclidean
(i.e., $\RR^d$) groups and in $\ZZ^d$ yield estimates of
$\Tu(\Om)$. However, in these estimates some notion of the size,
i.e. density of $\Lambda$ played a natural role, and thus in
groups where we had no grasp of the notion, we could not
accomplish such estimates.

In the present work a recent generalized notion of uniform
asymptotic upper density is invoked, allowing a more general
investigation of the Tur\'an constant in relation to the above
structural properties. Our main result extends a result of Arestov
and Berdysheva, (also obtained independently and along different
lines by Kolountzakis and the author), stating that convex tiles
of a Euclidean space necessarily have $\Tu_{\RR^d}(\Om)
=|\Om|/2^{d} $. In our extension $\RR^d$ could be replaced by any
LCA group, convexity is dropped, and the condition of tiling is
also relaxed to a certain packing type condition and positive
uniform asymptotic upper density of the set $\Lambda$.

Also our goal is to give a more complete account of all the
related developments and history, because until now an exhaustive
overview of the full background of the so-called Tur\'an problem
was not delivered.
\end{abstract}

{\bf MSC 2000 Subject Classification.} Primary 43A35; Secondary
 42A82, 43A25, 22B05, 42A05,.

{\bf Keywords and phrases.} {\it Positive definite functions,
upper density, asymptotic uniform upper density, locally compact
abelian groups, Tur\'an extremal problem.}

\newpage

\section{Introduction} \label{sec:introduction}

\subsection{The Tur\'an problem}

We study the following problem, generally investigated under the
name of \emph{''Tur\' an's Problem"}, following Stechkin
\cite{stechkin:periodic}, who recalls a question posed to him in
personal discussion.

\begin{problem}\label{pr:Turan}
Given an open set $\Omega$, symmetric about $0$, and a continuous,
positive definite, integrable function $f$, with $\supp f
\subseteq \Omega$ and with $f(0)=1$, how large can $\int f$ be?
\end{problem}

Although this name for the problem is quite widespread, one has to
note that all the important versions of the problem were
investigated well before the beginning of the seventies, when the
discussion of Tur\'an and Stechkin took place.

About the same time when Tur\'an discussed the question with
Stechkin, American researchers already investigated in detail the
\emph{square integral version of the problem}, see \cite{GRR,
Page, Domar}. Their reason for searching the extremal function and
value came from radar engineering problems at the Jet Propulsion
Laboratory.

More importantly, Problem \ref{pr:Turan} appears as early as in
the thirties \cite{Siegel}, when Siegel considered the question
for $\Omega$ being a ball, or even an ellipsoid in Euclidean space
$\RR^d$, and established the right extremal value $|\Omega|/2^d$.
The question occurred to Siegel as a theoretical possibility to
sharpen the Minkowski Latice Point Theorem. Although Siegel
concluded that, due to the extremal value being just as large as
the Minkowski Lattice Point Theorem would require, this geometric
statement can not be further sharpened through improvement on the
extremal problem, nevertheless he works out the extremal problem
fully and exhibits some nice applications in the theory of entire
functions.

Furthermore, the same Problem \ref{pr:Turan} appeared in a paper
of Boas and Kac \cite{BK} in the forties, even if the main
direction of the study there was a different version, what is
nowadays generally called the \emph{pointwise Tur\'an problem}.
However, as is realized partially in \cite{BK} and fully only
later in \cite{kolountzakis:pointwise}, the pointwise Tur\'an
problem -- formulated in the classical setting of Fourier series,
but nevertheless equivalent to the Euclidean space settings of
\cite{BK} -- goes back already to Caratheodory \cite{Cara} and
Fej\'er \cite{Fej}.

The Tur\'an problem was considered by Stechkin on an interval in
the torus $\TT=\RR/\ZZ$ \cite{stechkin:periodic} and in $\RR$ by
Boas and Kac \cite{BK}, but extensions were to follow in several
directions.

Such a question is interesting in the study of sphere packings
\cite{gorbachev:sphere,cohn:packings, ConSlo}, in additive number
theory \cite{ruzsa:uniform,KMF,Mont,GorbMan03} and in the theory
of Dirichlet characters and exponential sums \cite{KS}, among
other things.

\subsection{One dimensional case of the Tur\'an problem}
\label{sec:onedim}

Already the symmetric interval case in one dimension presents
nontrivial complications, which were resolved satisfactorily only
recently. We discuss the development of the problem from the
outset to date.

Actually, Tur\'an's interest might have come from another area in
number theory, namely Diophantine approximation. (Let us point out
that \cite{AKP} starts with the sentence: "With regard to
applications in number theory, P. Tur\'an stated the following
problem:", while at the end of the paper there is special
expression of gratitude to Professor Stechkin for his interest in
this work. Also, Gorbachev writes in \cite[p.
314]{gorbachev:ball}): "Studying applications in number theory, P.
Tur\'an posed the problem ...")

One can hypothesise that Tur\'an thought of the elegant proof of
the well-known Dirichlet approximation theorem, stating that for
any given $\alpha\in\RR$ at least one multiple $n\alpha$ in the
range $n=1,\dots,N$ have to approach some integer as close as
$1/(N+1)$. The proof, which uses Fourier analysis and Fej\'er
kernels in particular, is presented in \cite[p. 99]{Mont}, and in
a generalized framework it is explained in \cite{BlMo}, but it is
remarked in \cite[p. 105]{Mont} that the idea comes from Siegel
\cite{Siegel}, so Tur\'an could have been well aware of it. Let us
briefly present the argument right here.

If we wish to detect multiples $n\alpha$ of $\alpha\in \RR$ which
fall in the $\delta$-neighborhood of an integer, that is which
have $\|n\alpha\| < \delta$ (where, as usual in this field,
$\|x\|:=\dist (x,\ZZ)$), then we can use that for the triangle
function $F(x):=F_{\delta}(x):=\max (1-\|x\|/\delta)_{+}$, we have
$F(n\alpha)>0$ iff $\|n\alpha\|<\delta$. So if with an arbitrary
$\delta>1/(N+1)$ we can work through a proof of $F(n\alpha)>0$ for
some $n\in [1,N]$, then the proof yields the sharp form of the
Dirichlet approximation theorem. (It is indeed sharp, because for
no $N\in\NN$ can any better statement hold true, as the easy
example of $\alpha:=1/(N+1)$ shows.)

So we take now $S:=\sum_{n=1}^{N}(1-\frac{|n|}{N+1}) F(n\alpha)$,
or, since $F$ is even and $F(0)=1$, consider the more symmetric
sum $2S+1=\sum_{n=-N}^{N}(1-\frac{|n|}{N+1}) F(n\alpha)$. Note
that $\widehat{F_ \delta}(t)=\delta \cdot\left(\frac{\sin(\pi
\delta t)}{\pi \delta t}\right)^2$, so in particular with the
nonnegative coefficients $\widehat{F}(k)=c_k$ we can write (with
$e(t):=e^{2\pi i t}$)
\begin{equation}\label{eq:triangleFs}
F_{\delta}(x)=\sum_{k=-\infty}^{\infty} c_k e(kx)\qquad
c_0=\delta,~c_k=\delta\cdot\left(\frac{\sin(\pi k\delta)}{\pi
k\delta}\right)^2 ~(k=\pm 1,\pm 2,\dots).
\end{equation}
It suffices to show $S>0$. With the Fej\'er kernels
$\sigma_N(x):=\sum_{n=-N}^N \left(1-\frac{|n|}{N+1}\right)
e(nx)=\frac{1}{N+1} \cdot\left(\frac{\sin(\pi (N+1)x)}{\pi
x}\right)^2\geq 0$, after a change of the order of summation we
are led to
\begin{align*}
2S+1 & = \sum_{k=-\infty}^{\infty} c_k
\sum_{n=-N}^{N}\left(1-\frac{|n|}{N+1}\right) e(nk\alpha) \\ &
=c_0 \sigma_N(0) + 2 \sum_{k=1}^{\infty} c_k \sigma_N(k\alpha)\geq
c_0\sigma_N(0)= \delta (N+1) >1,
\end{align*}
which concludes the argument.

Now if in place of the triangle function with $\delta=1/(N+1)$
another positive definite (i.e. $\widehat{f}\geq 0$) function $f$
could be put with $\supp f \subset [-\delta,\delta]$ and $f(0)=1$
but with $\widehat{f}(0)>\delta$ then the above argument with $f$
in place of $F$ would give $S>0$ even for $\delta=1/(N+1)$,
clearly a contradiction since the Dirichlet approximation theorem
cannot be further sharpened. That round-about argument already
gives that for $h$ a reciprocal of an integer, the triangle
function $F_h$ is extremal in the Tur\'an problem for $[-h,h]$. In
other words, we obtain Stechkin's result \cite{stechkin:periodic},
(see also below) already from considerations of Diophantine
approximation.

So Tur\'an asked Stechkin if for any $h>0$ the triangle function
provides the largest possible integral among all positive definite
functions vanishing outside $[-h,h]$ and normalized by attaining
the value $1$ at $0$. Stechkin derived that this is the case for
$h$ being the reciprocal of a natural number: by monotonicity in
$h$ for other values he could conclude an estimate. Anticipating
and slightly abusing the general notations below, denote the
extremal value by $T(h)$: then Stechkin obtained $T(h)=h+O(h^2)$.
This was sharpened later by Gorbachev \cite{gorbachev:ball} and
Popov \cite{Popov} (cited in \cite[p. 77]{IGR}) to $h+O(h^3)$.

The corresponding Tur\'an extremal value $T_{\RR}(h)$ on the real
line is, by simple dilation, depends linearly on the interval
length and is just $h T_{\RR}(1)$ for any interval $I=[-h,h]$. On
the other hand it follows already from $\lim_{h\to 0+} T(h)/h=1$
that e.g. for the unit interval $[-1,1]$ the extremal function
must be the triangle function and $T_{\RR}(1)=1$, hence
$T_{\RR}(h)=h$. In fact, this case was already settled earlier by
Boas and Katz in \cite{BK} as a byproduct of their investigation
of the pointwise question.

But there is another observation, seemingly well-known although no
written source can be found. Namely, it is also known for some
time that for $h$ \emph{not being a reciprocal of an integer
number}, the triangle function \emph{can indeed be improved upon}
a little. Indeed, the triangle function $F_h$ has Fourier
transform which vanishes precisely at integer multiples of $1/h$,
and in case $1/h \notin \NN$, some multiples fall outside $\ZZ$.
And then the otherwise double zeroes of $\widehat{F_h}$ can even
be substituted by a product of two close-by zero factors, allowing
a small interval in between, where the Fourier transform can be
negative. This negativity spoils positive definiteness regarding
the function on $\RR$: but on $\TT$ it does not, for only the
values at integer increments must be nonnegative in order that a
function be positive definite on $\TT$. With a detailed calculus
(using also the symmetric pair of zeroes) such an improvement upon
the triangle function is indeed possible. (Note that here
$\widehat{F}$, so also $\int \widehat{F}=F(0)$ is perturbed while
$\widehat{F}(0)=\int F$ is unchanged.) I have heard this
construction explained in lectures during my university studies
\cite{Hal}; in Russia, a similar observation was communicated by
A. Yu Popov \cite{Popov} and later recorded in writing in
\cite{GorbMan01, GorbMan04, IGR}.

As said above, the computation of exact values of $T(h)$ started
with Stechkin for $h=1/q$, $q\in \NN$: these are the only cases
when $T(h)=h$. Further values, already deviating from this simple
formula, were computed for {\em some rational} $h$ in
\cite{Manoshina, GorbMan01,GorbMan04} and finally {\em for all
rational} $h$ in \cite{IGR,IvanovRudomazina}. Knowing the value
for rational $h$ led Ivanov to further investigations which
established continuity of the extremal value in function of $h$,
and thus gave the complete solution of Tur\'an's problem on the
torus \cite{Ivanovfull}. In fact, the above works also established
that for $[-h,h]\subset \TT$ the Tur\'an extremal problem and the
Delsarte extremal problem (described below in \S
\ref{sec:variants}) has the same extremal value (and extremal
functions). Note that this coincidence does not hold true in
general.

However, it seems that almost nothing is known about Tur\'an
extremal values of other, one would say "dispersed" sets not being
intervals. A natural conjecture is that e.g. on $\RR$ (or perhaps
even on $\TT$ ?) a set $\Omega\subset\RR$ of fixed measure
$|\Omega|=m$ can have maximal Tur\'an constant value if only it is
a zero-symmetric interval $[-m/2,m/2]$. What we know from
\cite[Theorem 6]{kolountzakis:groups} is that we certainly have
$T(\Omega) \leq m/2$, that is, in $\RR$ no ''better sets", than
zero-symmetric intervals, can exist. However, uniqueness is not
known, not even for $\RR$. In \cite{kolountzakis:groups} there is
a more general estimate in function of the prescribed measure $m$,
but for higher dimensions it is far less precise. Also, regarding
the discrete group $\ZZ$ one must observe that zero-symmetric
intervals $[-N,N]\subset \ZZ$ have the same Tur\'an extremal
values as their homothetic copies $k[-N,N]$ ($k\in\NN$) which
already destroys the hope for ''uniqueness only for intervals". In
higher dimensions not even the right class of the corresponding
''condensed sets", like intervals in dimension one, has been
identified.

\subsection{Tur\'an's problem in the multivariate setting}
\label{sec:multivariate}

Already as early as in the 1930's, Siegel \cite{Siegel} proved
that for an ellipsoid in $\RR^d$ the extremal value in Problem
\ref{pr:Turan} is $|\Om|/2^d$.

In the 1940's, Boas and Katz \cite{BK} mentioned that Poisson
summation may be used to treat similar questions in higher
dimensions. Besides mentioning the group settings, Garcia \& al.
\cite{GRR} and Domar \cite{Domar} also touches upon the question
without going into further details. The packing problem by balls
in Euclidean space has already been treated by many authors via
multivariate extremal problems of the type, but there the optimal
approach is to pose a closely related, still different variant,
named Delsarte- (and also as Logan- and Levenshtein-) problem. See
e.g. \cite{gorbachev:sphere, cohn:packings} and the references
therein.

As a direct generalization of Stechkin's work, Andreev
\cite{Andreev} calculated the Tur\'an constants of cubes $Q_h^d$
in $\TT^d$ obtaining $h^d+O(h^{d+1})$. Moreover, he estimated the
Tur\'an constant of the cross-politope ($\ell_1$-ball) $O_h^d$ in
$\TT^d$: his estimates are asymptotically sharp when $d=2$.
Gorbachev \cite{gorbachev:ball} simultaneously sharpened and
extended these results proving that for any centrally symmetric
body $D\subset [-1,1]^d$ and for all $0<h<1/2$ we always have
$\Tu_{\TT^d}(hD)=\Tu_{\RR^d}(D)\cdot h^d+O(h^{d+2})$.

Arestov and Berdysheva \cite{hexagon} offers a systematic
investigation of the multivariate Tur\'an problem collecting
several natural properties. They also prove that the hexagon has
Tur\'an constant exactly one fourth of the area of itself.
Gorbachov \cite{gorbachev:ball} proved that the unit ball
$B_d\subset \RR^d$ has Tur\'an constant $2^{-d}|B_d|$, where
$|B_d|$ is the volume ($d$-dimensional Lebesgue measure) of the
ball. Another proof of this fact can be found in
\cite{kolountzakis:turan}, but we have already noted that the
result goes back to Siegel \cite{Siegel}.

There is a special interest in the case which concerns $\Omega$
being a (centrally symmetric) convex subset of $\RR^d$
\cite{hexagon,arestov:tiles,gorbachev:ball,kolountzakis:turan},
since in this case the natural analog of the triangle function,
the self-convolution (convolution square) of the characteristic
function $\chi_{\frac12 \Omega}$ of the half-body $\frac12 \Omega$
is available showing that $\Tu_{\RR^d}(\Omega) \geq |\Omega|/2^d$.
The natural conjecture is that for a symmetric convex body this
convolution square is extremal, and $\Tu_{\RR^d}(\Omega) =
|\Omega|/2^d$. (Note that this fails in $\TT^d$, already for
$d=1$, for some sets $\Omega$.) Convex bodies with this property
may be called Tur\'an type, or Stechkin-regular, or, perhaps,
\emph{Stechkin-Tur\'an domains}, while symmetric convex bodies in
$\RR^d$ with $\Tu_{\RR^d}(\Omega) > |\Omega|/2^d$ as anti-Tur\'an
or \emph{non-Stechkin-Tur\'an} domains. Thus the above mentioned
result about the ball can be reworded saying that the ball is of
Stechkin-Tur\'an type.

To date, no non-Stechkin-Tur\'an domains are known, although the
family of known Stechkin-Tur\'an domains is also quite meager
(apart from $d=1$ when everything is clear for the intervals).

In \cite{hexagon,arestov:tiles} Arestov and Berdysheva prove that
if $\Omega\subseteq\RR^d$ is a convex polytope which can tile
space when translated by the lattice $\Lambda\subseteq\RR^d$ (this
means that the copies $\Omega+\lambda$, $\lambda\in\Lambda$, are
non-overlapping and almost every point in space is covered) then
$\Tu_{\RR^d}(\Omega) = \Abs{\Omega}/2^{d}$. Whence the class of
Stechkin-Tur\'an domains includes, by the result of Arestov and
Berdysheva, convex lattice tiles.

Kolountzakis and R\'ev\'esz \cite{kolountzakis:turan} showed the
same formula for all convex domains in $\RR^d$ which are
\emph{spectral}. For the definition and some context see  \S
\ref{sec:spectral}, where it will be explained that all convex
tiles are spectral, and so the result of Arestov and Berdysheva is
also a consequence of the result in \cite{kolountzakis:turan}.

For not necessarily convex sets, further results are contained in
\cite{kolountzakis:groups} for $\RR^d$, $\TT^d$ and $\ZZ^d$.

\subsection{Variants and relatives of the Tur\'an problem}
\label{sec:variants}

In the same class of functions $\FF$ various similar quantities
may be maximized. The two most natural versions concern the
\emph{square-integral} of $f\in\FF$, henceforth called the
\emph{square-integral Tur\'an problem}, and the \emph{function
value} at some arbitrarily prescribed point $z\in\Om$, called the
\emph{pointwise Tur\'an problem}.

The square-integral Tur\'an problem occurred for applied
scientists in connection with radar design (radar ambiguity and
overall signal strength maximizing), see \cite{Page, GRR}. Further
interesting results were obtained in \cite{Domar}. Nevertheless,
already on the torus $\TT$ the exact answer is not known, even if
Page \cite{Page} provides convincing computational evidence for
certain conjectures in case $h=\pi/n$, and the existence of
\emph{some} extremal function is known.

The natural pointwise analogue of Problem \ref{pr:Turan} is the
maximization of the function value $f(z)$, for given, fixed $z\in
\Om$, in place of the integral, over functions from the same class
than in Problem \ref{pr:Turan}. (Actually, the question can as
well be posed in any LCA group.) For intervals in $\TT$ or $\RR$
this was studied in \cite{ABB} under the name of "the pointwise
Tur\'an problem", although the same problem was already settled in
the relatively easy case of an interval $(-h,h)\subset\RR$ by Boas
and Kac in \cite{BK}. For general domains in arbitrary dimension
the problem was further studied in \cite{kolountzakis:pointwise}.

Further ramifications are obtained with considering different
variations of the above definitions. E.g. Belov and Konyagin
\cite{Belov-Konyagin:summary, Belov-Konyagin:detailed} considers
functions with integer coefficients, and periodic even functions
$f\sim \sum_k a_k \cos(kx)$ with $\sum_k |a_k| =1$ but with not
necessarily $a_k\geq 0$, i.e. not necessarily positive definite.

Berdysheva and Berens considers the multivariate question
restricted to the class of $\ell_1$-radial functions \cite{BerBer}.

A very natural version of the same problem is the Delsarte problem
\cite{Del} (also known under the names of Logan and Levenshtein):
here the only change in the conditioning of the extremal problem
is that we assume, instead of vanishing of $f$ outside a given set
$\Omega$, only the less restrictive condition that $f$ be
nonnegative outside the given set. Both extremal problems are
suitable in deriving estimates of packing densities through
Poisson summation: this is exploited in particular for balls in
Euclidean space, see e.g. \cite{Del, KabLev, Lev, AreBab97,
ConSlo, AreBab00, gorbachev:sphere, cohn:packings}.

There are several other rather similar, yet different extremal
problems around. E.g. one related intriguing question \cite{SS},
dealt with by several authors, is the maximization of $\int f$ for
real functions $f$ supported in $[-1,1]$, admitting
$\|f\|_{\infty}=1$, but instead of being positive definite, (which
in $\RR$ is equivalent to being represented as $g*\widetilde{g}$),
having only a representation $f=g*g$ with some $g\geq 0$ supported
in the half-interval $[-1/2,1/2]$.

Here we do not consider these relatives of the Tur\'an problem.

\subsection{Extension of the problem to LCA groups}
\label{sec:Turanongroups}

Some authors have already extended the investigations, although
not that systematically as in case of the multivariate setting, to
locally compact abelian groups (LCA groups henceforth). This is
the natural settings for a general investigation, since the basic
notions used in the formulation of the question -- positive
definiteness, neighborhood of zero, support in and integral over a
$0$-symmetric set $\Omega$ -- can be considered whenever we have
the algebraic and topological structure of an LCA group. Note that
we always have the Haar measure, which makes the consideration of
the integral over a compact set (hence over the support of a
compactly supported positive definite function) well defined. Also
recall that on a LCA group $G$ a function $f$ is called positive
definite if the inequality
\begin{equation}\label{posdefdef}
\sum_{n,m=1}^{N}~ c_n \overline{c_m} f(x_n-x_m)\ge 0 \qquad
(\forall x_1,\dots,x_N\in G, \forall c_1,\dots,c_N\in\CC)
\end{equation}
holds true.  Note that positive definite functions are not assumed
to be continuous. Still, all such functions $f$ are necessarily
bounded by $f(0)$ \cite[p.\ 18, Eqn (3)]{rudin:groups}. Moreover,
$f(x)=\widetilde{f}(x):=\overline{f(-x)}$ for all $x\in G$
\cite[p.\ 18, Eqn (2)]{rudin:groups}, hence the support of $f$ is
necessarily symmetric, and the condition $\supp f \subset \Om$
implies also $\supp f \subset \Om\cap(-\Om)$. The latter set being
symmetric, without loss of generality we can assume at the outset
that $\Om$ is symmetric itself. So in this paper the set $\Omega$
will always be taken to be a $0$-symmetric, open set in $G$.

We find the first mention of the group case in \cite{GRR}, and a
more systematic use of the settings (for the square-integral
Tur\'an problem) in \cite{Domar}. Utilizing also the work in
\cite{hexagon} on extensions to the several dimensional case, the
framework below was set up in \cite{kolountzakis:groups}. There we
obtained some fairly general results for compact LCA groups as
well as for the most classical non-compact groups: $\RR^d$,
$\TT^d$ and $\ZZ^d$.

In this paper we study the problem in the generality of LCA
groups. This simplifies and unifies many of the existing results
and gives several new estimates and examples. If $G$ is a LCA
group a continuous function $f\in L^1(G)$ is positive definite if
its Fourier transform $\ft{f}:\ft{G}\to\CC$ is everywhere
nonnegative on the dual group $\ft{G}$. For the relevant
definitions of the Fourier transform we refer to \cite[Chapter
VII]{katznelson:book} or \cite{rudin:groups}.

We say that $f$ belongs to the class $\FF(\Omega)$ of functions if
$f\in L^1(G)$ is continuous, positive definite and is supported on
a closed subset of $\Omega$. For any positive definite function
$f$ it follows that $f(0) \ge f(x)$ for any $x\in G$. This leads
to the estimate $\int_G f \le \Abs{\Omega} f(0)$ for all
$f\in\FF$, which is called (following Andreev \cite{Andreev}) the
{\em trivial estimate} from now on.

\begin{definition}\label{def:turan-constant}
The {\em Tur\'an constant} ${\cal T}_G(\Omega)$ of a
$0$-symmetric, open subset $\Omega$ of a LCA group $G$ is the
supremum of the quantity $ {\int_G f} /{f(0)} $, where $f\in
\FF(\Omega)$, i.e. $f\in L^1(G)$ is continuous and positive
definite, and $\supp f$ is a closed set contained in $\Omega$.
\end{definition}

In fact, depending on the precise requirements on the functions
considered, here we have certain variants of the problem: an
account of these is presented below in \S \ref{sec:equivalence}.

\begin{remark}
The quantity ${\mathcal T}_G(\Omega)$ depends on which
normalization we use for the Haar measure on $G$. If $G$ is
discrete we use the counting measure and if $G$ is compact and
non-discrete we normalize the measure of $G$ to be 1.
\end{remark}

The {\em trivial upper estimate} or \emph{trivial bound} for the
Tur\'an constant is thus ${\cal T}_G(\Omega) \le \Abs{\Omega}$.

\subsection{Various equivalent forms of the Tur\'an
problem}\label{sec:equivalence}

\noindent In fact, it is worth noting that Tur\'an type problems
can be, and have been considered with various settings, although
the relation of these has not been fully clarified yet. Thus in
extending the investigation to LCA groups or to domains in
Euclidean groups which are not convex, the issue of equivalence
has to be dealt with. One may consider the following function
classes.
\begin{eqnarray}
\label{Flclosed} \FF_1(\Om) &:=& \bigg\{f\in L^1(G)~: ~~\supp f
\subset \Om,~~ f \,{\rm positive \,\,
definite}\, \bigg\}\,, \\
\label{Flcontclosed} \FF_{\&}(\Om) &:=& \bigg\{f\in L^1(G)\cap
C(G)~: ~~\supp f \subset \Om,~~ f \,{\rm positive \,\,
definite}\, \bigg\}\,, \\
\label{Flcompact} \FF_c(\Om) &:=& \bigg\{f\in L^1(G)~: ~~\supp f
\subset \subset \Om,~~ f \,{\rm positive \,\,
definite}\, \bigg\}\,, \\
\label{Fcontcompact} \FF(\Om) &:=& \bigg\{f\in C(G)~: ~~\supp f
\subset \subset \Om,~~ f \,{\rm positive \,\, definite}\,
\bigg\}\,\,.
\end{eqnarray}
In $\FF_1, \FF_{\&}$ $\supp f$ is assumed to be merely closed ad
not necessarily compact, and in $\FF_1, \FF_c$ the function $f$
may be discontinuous.

The respective Tur\'an constants are
\begin{eqnarray}\label{Turanconstants}
\Tu_G^{(1)}(\Om)~ {\rm or} ~ \Tu_G^{\&}(\Om)~ {\rm or} ~
\Tu_G^c(\Om)~ {\rm or} ~\Tu_G(\Om) := \qquad \qquad\qquad\qquad
\qquad\qquad\qquad\\
\qquad \qquad\qquad \sup \bigg\{\frac{\int_G f}{f(0)} \,:~ f \in
\FF_1(\Om) ~ {\rm or} ~ \FF_{\&}(\Om) ~ {\rm or} ~ \FF_c(\Om) ~
{\rm or}~ \FF(\Om), ~ {\rm resp.} \bigg\}.\notag
\end{eqnarray}

In general we should consider functions $f:G\to \CC$. However, it
is easy to see from \eqref{posdefdef} that together with $f$, also
$\overline{f}$ is positive definite. Whence even $\varphi:=\Re f$
is positive definite, while belonging to the same function class.
As we also have $f(0)=\varphi(0)$ and $\int f=\int \varphi$,
restriction to real valued functions does not change the values of
the Tur\'an constants.

For a detailed introduction to positive definite functions, and
for a proof of the following theorem, we refer to
\cite{kolountzakis:groups}.

\begin{theorem}[Kolountzakis-R\'ev\'esz]\label{th:equivalences}
We have for any LCA group the equivalence of the above defined
versions of the Tur\'an constants:
\begin{equation}\label{equivalance:Turanconstants}
\Tu_G^{(1)}(\Om)=\Tu_G^{\&}(\Om)= \Tu_G^c(\Om)=\Tu_G(\Om)\,.
\end{equation}
\end{theorem}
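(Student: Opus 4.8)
The four classes are nested: $\FF(\Om)\subseteq\FF_{\&}(\Om)\subseteq\FF_1(\Om)$ and $\FF(\Om)\subseteq\FF_c(\Om)\subseteq\FF_1(\Om)$, since compact support is stronger than closed support and imposing continuity only restricts the class. Taking the supremum of the same functional $\int_G f/f(0)$ over a larger class can only increase it, so $\Tu_G(\Om)\le\Tu_G^{\&}(\Om)\le\Tu_G^{(1)}(\Om)$ and $\Tu_G(\Om)\le\Tu_G^c(\Om)\le\Tu_G^{(1)}(\Om)$. Hence the whole theorem reduces to the single reverse inequality $\Tu_G^{(1)}(\Om)\le\Tu_G(\Om)$: one must show that every $f\in\FF_1(\Om)$ is approximable, in the ratio $\int_G f/f(0)$, by members of the smallest class $\FF(\Om)$.

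The plan rests on two operations that both preserve positive definiteness but act oppositely on supports. Pointwise multiplication of two positive definite functions is again positive definite (the Hadamard product of the two positive semidefinite matrices in \eqref{posdefdef} is positive semidefinite) and it shrinks support, $\supp(fg)\subseteq\supp f\cap\supp g$; convolution smooths and satisfies $\ft{f*g}=\ft{f}\,\ft{g}\ge0$, so it preserves positive definiteness while enlarging support, $\supp(f*g)\subseteq\supp f+\supp g$. I would combine them using two auxiliary families, each chosen to be simultaneously pointwise nonnegative and positive definite. First, a net $u_\alpha:=w_\alpha*\widetilde{w_\alpha}$ with $w_\alpha=\Abs{K_\alpha}^{-1/2}\chi_{K_\alpha}$, where $(K_\alpha)$ is a F{\o}lner net of compact sets; each $u_\alpha$ is continuous, compactly supported, positive definite and nonnegative, with $u_\alpha(0)=1$, $u_\alpha\le1$, and—this is the one genuinely group-theoretic input—$u_\alpha\to1$ uniformly on compacta, since $u_\alpha(x)=\Abs{K_\alpha\cap(K_\alpha+x)}/\Abs{K_\alpha}\to1$ by the F{\o}lner property (available because every LCA group is amenable). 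Second, a positive-definite approximate identity $\psi_\beta:=v_\beta*\widetilde{v_\beta}$ with $v_\beta\ge0$, $\int_G v_\beta=1$ and $\supp v_\beta$ shrinking to $\{0\}$; then $\psi_\beta\ge0$ is continuous, positive definite, has $\int_G\psi_\beta=1$, and $\supp\psi_\beta$ shrinks to $\{0\}$.

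Given $f\in\FF_1(\Om)$, I would first set $g_\alpha:=f\,u_\alpha$. By the multiplication property $g_\alpha$ is positive definite, lies in $L^1$, and has $\supp g_\alpha\subseteq\supp f\subseteq\Om$ now compact; moreover $\int_G g_\alpha\to\int_G f$ by dominated convergence, as $\Abs{f u_\alpha}\le\Abs{f}\in L^1$ and $u_\alpha\to1$. Writing $K:=\supp g_\alpha$, compactness of $K$ in the open set $\Om$ gives a symmetric neighborhood $V$ of $0$ with $K+V\subseteq\Om$; choosing $\beta$ so that $\supp\psi_\beta\subseteq V$ and putting $h:=g_\alpha*\psi_\beta$, the function $h$ is continuous, positive definite, and has compact support $\supp h\subseteq K+V\subseteq\Om$, so $h\in\FF(\Om)$. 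Then $\int_G h=\int_G g_\alpha$ because $\int_G\psi_\beta=1$, while the decisive one-sided control of the normalization is $h(0)=\int_G g_\alpha\,\psi_\beta\le f(0)\int_G\psi_\beta=f(0)$, using $g_\alpha=f u_\alpha\le f(0)$ (since $f\le f(0)$ and $0\le u_\alpha\le1$) together with $\psi_\beta\ge0$. Consequently $\int_G h/h(0)\ge\int_G g_\alpha/f(0)$ for every admissible $\beta$, so $\Tu_G(\Om)\ge\int_G g_\alpha/f(0)$, and letting $\alpha$ refine gives $\Tu_G(\Om)\ge\int_G f/f(0)$; the supremum over $f\in\FF_1(\Om)$ then yields $\Tu_G^{(1)}(\Om)\le\Tu_G(\Om)$ and closes the chain.

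I expect the main obstacle to be the construction and convergence of the net $u_\alpha$: producing, on an \emph{arbitrary} LCA group, continuous compactly supported positive definite functions tending to the constant $1$ on compacta is precisely where the structure of the group enters, and it is resolved by invoking the F{\o}lner property of the amenable group $G$. A secondary point of care is the normalizing value $f(0)$ when $f$ is merely a pointwise positive definite $L^1$ function, but this is handled cleanly by the one-sided estimate $h(0)\le f(0)$ above, so that possible discontinuities of $f$ at $0$ never cause the ratio to drop. The remaining steps—positive definiteness of products and of convolutions, the support inclusions, and the dominated-convergence computation—are routine.
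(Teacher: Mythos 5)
The paper itself offers no proof of this theorem---it explicitly defers to \cite{kolountzakis:groups}---so there is no in-paper argument to compare against; judged on its own, your proof is correct and is essentially the standard regularization argument used in that reference. The reduction via the nesting of the four classes to the single inequality $\Tu_G^{(1)}(\Om)\le\Tu_G(\Om)$ is right, and the two-step scheme (truncate by Schur-multiplying with a continuous, compactly supported, positive definite $u_\alpha$ close to $1$ on compacta; then mollify by convolving with a positive definite approximate identity $\psi_\beta=v_\beta*\widetilde{v_\beta}$ of integral $1$ and small support) does exactly what is needed, with the one-sided bound $h(0)\le f(0)$ correctly absorbing the possible discontinuity of $f$ at $0$. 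Three small points deserve a line each in a polished write-up: (i) the positive definiteness of $g_\alpha*\psi_\beta$ for a merely measurable pointwise positive definite $g_\alpha$ should be verified directly from \eqref{posdefdef} (integrate the positive semidefinite kernel $g_\alpha(u-v)$ against the measure $\sum_n c_n\delta_{x_n}*(v_\beta\,d\mu)$), rather than via Fourier transforms, since $\ft{g_\alpha}\ge 0$ is not immediate for discontinuous $g_\alpha$; (ii) the degenerate case $h(0)=0$ is harmless but should be dispatched (if $\int_G f\le 0$ there is nothing to prove, and otherwise $\int_G h=\int_G g_\alpha>0$ forces $h(0)>0$ for $\alpha$ large); (iii) the F{\o}lner/amenability input is a legitimate way to produce $u_\alpha\to 1$ on compacta, though one can equally get it from the structure theorem $G\cong\RR^n\times G_0$ with $G_0$ containing a compact open subgroup, which is closer to the toolkit the paper uses elsewhere (e.g.\ in the proof of Theorem \ref{th:packingth}).
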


Note that the original formulation, presented also above in
Definition \ref{def:turan-constant}, corresponds to
$\Tu_G^{\&}(\Om)$. Also note that with this setup, e.g. the
interval case $\Omega=[-h,h]\subset \TT$ or $\RR$ admits no
extremal function, because the support of $\Delta_h$ is the full
$\overline{\Omega}$, not a closed subset of the open set $(-h,h)$
In this case an obvious limiting process is neglected in the
formulation of the results above.

\begin{remark} It is not fully clarified what happens for
functions vanishing only outside of $\Om$, but having nonzero
values up to the boundary $\partial\Om$.
\end{remark}

Our main result in this paper appears in Theorem
\ref{th:packingth}. This is an essential extension of the above
mentioned result of Arestov and Berdysheva about convex lattice
tiles in Euclidean spaces being of the Stechkin-Tur\'an type. To
arrive at the result we need some preparations. So in the next
section we describe the structural context, including without
proofs a different extension of the result of Arestov and
Berdysheva - in the direction of spectrality - already given in
\cite{kolountzakis:turan}. Also we explain the relevant new notion
of uniform asymptotic upper density and its computation or
estimation in relation with packing, covering and tiling. The main
result then appears in \S \ref{sec:upper-bound-from-packing}.

\section{Structural properties of sets -- tiling, packing,
spectrality, and uniform asymptotic upper
density}\label{sec:tilingpackingspectra}

\subsection{Tiling and packing}
\label{sec:tiling}

\noindent Suppose $G$ is a LCA group. We say that a nonnegative
function $f\in L^1(G)$ tiles $G$ by translation with a set
$\Lambda\subseteq G$ at level $c\in\CC$ if
$$
\sum_{\lambda\in\Lambda} f(x-\lambda) = c
$$
for a.a. $x\in G$, with the sum converging absolutely. We then
write ``$f+\Lambda = c G$''.

We say that $f$ {\em packs} $G$ with the translation set $\Lambda$
at level $c\in\RR$, and write $f+\Lambda \le c G$, if
$$
\sum_{\lambda\in\Lambda}f(x-\lambda) \le c,
$$
for a.a. $x\in G$.

In particular, a measurable set $\Omega\subseteq\RR^d$ is a {\em
translational tile} if there exists a set $\Lambda\subseteq\RR^d$
such that almost all (Lebesgue) points in $\RR^d$ belong to
exactly one of the translates
$$
\Omega + \lambda,\ \ \ \lambda\in\Lambda.
$$
We denote this condition by $\Omega + \Lambda = \RR^d$.

If $f\in L^1(\RR^d)$ is nonnegative we say that $f$ tiles with
$\Lambda$ at level $\ell$ if
$$
\sum_{\lambda\in\Lambda} f(x-\lambda) = \ell,\ \ \mbox{\rm a.e.\
$x$}.
$$
We denote this latter condition by $f+\Lambda = \ell\RR^d$.


In any tiling the translation set has some properties of density,
which hold uniformly in space. A set $\Lambda\subseteq\RR^d$ has
(uniform) density $\rho$ if
$$
\lim_{R\to\infty} {\#(\Lambda \cap B_R(x)) \over \Abs{B_R(x)}} \to
\rho
$$
uniformly in $x\in\RR^d$. We write $\rho = \dens \Lambda$. We say
that $\Lambda$ has (uniformly) bounded density if the fraction
above is bounded by a constant $\rho$ uniformly for $x\in\RR$ and
$R>1$. We say then that $\Lambda$ has density (uniformly) bounded
by $\rho$.

\begin{remark} It is not hard to prove (see for example \cite{line}, Lemma
2.3, where it is proved in dimension one -- the proof extends
verbatim to higher dimension) that in any tiling $f+\Lambda =
\ell\RR^d$ the set $\Lambda$ has density $\ell / \int f$.
\end{remark}

When the group is finite (and we do not, therefore, have to worry
about the set $\Lambda$ being finite or not) the tiling condition
$f+\Lambda = c G$ means precisely $f*\chi_\Lambda = c$. Taking
Fourier transform, this is the same as $\ft{f}\ft{\chi_\Lambda} =
c\Abs{G}\chi_\Set{0}$, which is in turn equivalent to the
condition
\begin{equation}\label{spectral-condition-for-tiling}
\supp\ft{\chi_\Lambda} \subseteq \Set{0} \cup \Set{\ft{f}=0} \ \
\mbox{and}\ \ c={\Abs{\Lambda} \over \Abs{G}} \sum_{x\in G} f(x).
\end{equation}

Finally, if $E \subseteq G$ we say that $E$ packs with $\Lambda$
if $\chi_E$ packs with $\Lambda$ at level $1$. Observe that $E$
packs with $\Lambda$ if and only if
$$
(E-E) \cap (\Lambda-\Lambda) = \Set{0}.
$$

The packing type condition $\Omega\cap(\Lambda-\Lambda)=\{0\}$
will be used in Theorem \ref{th:packingth} below. This result will
be an essential extension of the earlier result of Arestov and
Berdysheva, stating that in $\RR^d$ a convex lattice tile is
necessary of the Stechkin-Tur\'an type. Another generalization of
this result appears in the next section, through another
structural property of sets, namely spectrality.

\subsection{Spectral sets}
\label{sec:spectral}

\begin{definition}\label{def:spectral} Let $G$ be a LCA group
and $\ft{G}$ be its dual group, that is the group of all
continuous group homomorphisms (characters) $G\to\CC$. We say that
the set $T \subseteq \ft{G}$ is a {\em spectrum} of $H \subseteq
G$ if and only if $T$ forms an orthogonal basis for $L^2(H)$.
\end{definition}


In particular, let $\Omega$ be a measurable subset of $\RR^d$ and
$\Lambda$ be a discrete subset of $\RR^d$. We write $ e_\lambda(x)
= \exp({2\pi i \inner{\lambda}{x}}),\ (x\in\RR^d)$, and $E_\Lambda
= \Set{e_\lambda:\ \lambda\in\Lambda} \subset L^2(\Omega)$. The
inner product and norm on $L^2(\Omega)$ are $\inner{f}{g}_\Omega =
\int_\Omega f \overline{g}, \ \mbox{ and }\ \Norm{f}_\Omega^2 =
\int_\Omega \Abs{f}^2$. The pair $(\Omega, \Lambda)$ is called a
{\em spectral pair} if $E_\Lambda$ is an orthogonal basis for
$L^2(\Omega)$. A set $\Omega$ will be called {\em spectral} if
there is $\Lambda\subset\RR^d$ such that $(\Omega, \Lambda)$ is a
spectral pair. The set $\Lambda$ is then called a {\em spectrum}
of $\Omega$.

\begin{example}If $Q_d = (-1/2, 1/2)^d$ is the cube of unit volume in
$\RR^d$ then $(Q_d, \ZZ^d)$ is a spectral pair, as is well known
by the ordinary $L^2$ theory of multiple Fourier series.
\end{example}

Bent Fuglede formulated the following famous conjecture in 1974.

\begin{conjecture}[Fuglede \cite{fuglede:conjecture}]\label{conj:fuglede}
Let $\Omega \subset \RR^d$ be a bounded open set.  Then $\Omega$
is spectral if and only if there exists $L \subset \RR^d$ such
that $\Omega + L = \RR^d$ is a tiling.
\end{conjecture}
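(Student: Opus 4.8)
The assertion is a biconditional between spectrality and tiling, and the natural plan is to treat the two implications separately through a common Fourier-analytic dictionary that translates each geometric condition into a statement about the zero set $\Set{\ft{\chi_\Omega}=0}$, exactly as in the finite-group reformulation \eqref{spectral-condition-for-tiling}. In each direction the lattice case is the firm ground, and the entire content of the conjecture lies in descending from the lattice case to arbitrary translation sets.

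For the direction \emph{tiling $\Rightarrow$ spectral}, suppose $\Omega + L = \RR^d$ is a tiling, i.e.\ $\chi_\Omega * \mu_L = 1$ almost everywhere, where $\mu_L = \sum_{\ell\in L}\delta_\ell$. Taking Fourier transforms in the distributional sense turns the tiling identity into the statement that $\ft{\chi_\Omega}\cdot\ft{\mu_L}$ is a multiple of $\delta_0$, which forces $\supp\ft{\mu_L}\subseteq\Set{0}\cup\Set{\ft{\chi_\Omega}=0}$. When $L$ is a lattice this is transparent: $\ft{\mu_L}$ is a multiple of the dual-lattice measure $\mu_{L^*}$, so the support condition reads $\ft{\chi_\Omega}(\ell^*)=0$ for every nonzero $\ell^*\in L^*$, which is precisely the orthogonality of the exponential system $E_{L^*}$ in $L^2(\Omega)$; completeness then follows from a Parseval count using $\Abs{\Omega}\cdot\dens L^*=1$, so $L^*$ is a spectrum. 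The plan for a general tiling set is to take $\Lambda:=\supp\ft{\mu_L}\setminus\Set{0}$, a discrete set of the correct density $\dens\Lambda=1/\Abs{\Omega}$ forced by the tiling-density identity recorded earlier, as the candidate spectrum, read off orthogonality directly from its inclusion in the zero set, and recover completeness from the same density computation.

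For \emph{spectral $\Rightarrow$ tiling}, suppose $\Lambda$ is a spectrum of $\Omega$. Orthogonality gives $(\Lambda-\Lambda)\setminus\Set{0}\subseteq\Set{\ft{\chi_\Omega}=0}$, while Parseval applied to the exponentials $e_t$ restricted to $\Omega$ yields the power-spectrum tiling
\[
\sum_{\lambda\in\Lambda}\Abs{\ft{\chi_\Omega}(t-\lambda)}^2 = \Abs{\Omega}^2 \qquad (\text{a.e. } t).
\]
When $\Lambda$ is a lattice we have $\Lambda-\Lambda=\Lambda$, so $\ft{\chi_\Omega}$ vanishes on $\Lambda\setminus\Set{0}$; transforming the candidate $\chi_\Omega*\mu_{\Lambda^*}$ gives $\ft{\chi_\Omega}\cdot\mu_\Lambda$, which is supported at $0$ since $\ft{\chi_\Omega}(0)=\Abs{\Omega}\neq0$, so $\chi_\Omega*\mu_{\Lambda^*}$ is constant and $\Omega+\Lambda^*=\RR^d$ is a tiling. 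For a general spectrum the plan is to upgrade the zero-set inclusion and the power-spectrum identity into an honest set tiling $\Omega+L=\RR^d$ for a translation set $L$ manufactured from the additive structure of $\Lambda$.

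The main obstacle, in both directions, is exactly the passage from the Fourier-side condition to a genuine geometric tiling when the relevant set ($L$ or $\Lambda$) is \emph{not} a lattice. The density is pinned in every case ($\dens\Lambda=\dens L=1/\Abs{\Omega}$, matching the tiling density), but correct density is far from sufficient to produce a tiling, and nothing in the orthogonality-plus-completeness hypothesis forces $\Lambda$ to carry the rigid periodic structure a lattice supplies: the inclusion $(\Lambda-\Lambda)\setminus\Set{0}\subseteq\Set{\ft{\chi_\Omega}=0}$ and the support condition on $\ft{\mu_L}$ constrain the \emph{differences} and the \emph{Fourier support} without pinning down the set itself. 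Controlling this additive-combinatorial structure is the crux; I expect essentially all the difficulty to concentrate here, and the decisive question is whether the orthogonal-basis hypothesis can be leveraged to recover the missing rigidity, or whether one must instead feed in some genuinely new structural input beyond density and the zero-set conditions.
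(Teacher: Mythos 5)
You are attempting to prove a statement that the paper does not prove and that is in fact false: the paper states Fuglede's assertion explicitly as a \emph{conjecture}, and two paragraphs later records that it has been \emph{disproved in both directions}. Tao \cite{tao:fuglede-down} constructed a spectral set in $\RR^5$ that does not tile, later pushed down to $\RR^4$ \cite{Mated4} and $\RR^3$ \cite{MM:hadamard}; conversely, tiles with no spectrum were constructed in $\RR^5$ \cite{MM:fuglede-converse}, $\RR^4$ \cite{FR}, and $\RR^3$ \cite{FMM}. So no proof of the biconditional as stated (for general $d$) can exist, and the only honest deliverables are the lattice case (Theorem \ref{th:lattice-fuglede}, due to Fuglede himself) and isolated positive results such as convex planar domains \cite{IKT}.

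Your write-up is actually quite self-aware on this point: the lattice-case arguments you sketch in both directions are essentially Fuglede's theorem and are sound, and you correctly isolate the passage from lattices to arbitrary translation sets as the crux, ending with the question of whether the orthogonality-plus-completeness hypothesis supplies the missing rigidity ``or whether one must instead feed in some genuinely new structural input.'' The answer is the latter, and worse: the counterexamples show the gap is unbridgeable, not merely hard. There are also technical flaws in the plan itself even before the obstruction bites. In the tiling~$\Rightarrow$~spectral direction, for a non-lattice $L$ the distribution $\ft{\mu_L}$ need not have discrete support, and the inclusion $\supp\ft{\mu_L}\subseteq\Set{0}\cup\Set{\ft{\chi_\Omega}=0}$ constrains the \emph{points} of that support, whereas orthogonality of $E_\Lambda$ requires the \emph{difference set} $\Lambda-\Lambda$ to lie in the zero set --- these are very different conditions, and your candidate $\Lambda:=\supp\ft{\mu_L}\setminus\Set{0}$ does not satisfy the latter in general. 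Symmetrically, in the spectral~$\Rightarrow$~tiling direction there is no canonical way to ``manufacture'' a translation set from the additive structure of a non-lattice spectrum; Tao's example shows the required $L$ simply may not exist. The correct conclusion for your proposal is that the biconditional should be downgraded to the lattice statement (Theorem \ref{th:lattice-fuglede}), which your arguments do establish.
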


One basis for the conjecture was that the lattice case of this
conjecture is easy to show, (see for example
\cite{fuglede:conjecture,nonsym}). In the following result the
dual lattice $\Lambda^*$ of a lattice $\Lambda$ is defined as
usual by $ \Lambda^* = \Set{x\in\RR^d:\ \forall \lambda\in\Lambda\
\ \inner{x}{\lambda} \in \ZZ}$.

\begin{theorem}[Fuglede \cite{fuglede:conjecture}]\label{th:lattice-fuglede}
The bounded, open domain $\Omega$ admits translational tilings by
a lattice $\Lambda$ if and only if $E_{\Lambda^*}$ is an
orthogonal basis for $L^2(\Omega)$.
\end{theorem}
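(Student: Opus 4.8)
The plan is to convert both conditions -- tiling by $\Lambda$ and the orthogonal basis property of $E_{\Lambda^*}$ in $L^2(\Omega)$ -- into equivalent statements about the Fourier transform $\ft{\chi_\Omega}$, and to show that they coincide. The bridge is the $\Lambda$-periodization $P(x):=\sum_{\lambda\in\Lambda}\chi_\Omega(x-\lambda)$, a $\Lambda$-periodic, integer-valued, locally integrable function (finite almost everywhere since $\chi_\Omega\in L^1$). First I would record the orthogonality computation: since $\inner{e_\mu}{e_\nu}_\Omega=\int_\Omega e^{2\pi i\inner{\mu-\nu}{x}}\,dx=\ft{\chi_\Omega}(\nu-\mu)$ and $\Lambda^*-\Lambda^*=\Lambda^*$, the system $E_{\Lambda^*}$ is orthogonal if and only if $\ft{\chi_\Omega}(\mu)=0$ for every $\mu\in\Lambda^*\setminus\Set{0}$.

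Next I would identify these same numbers as the Fourier coefficients of $P$ over the torus $\RR^d/\Lambda$: a standard unfolding shows that the $\mu$-th coefficient of $P$ (for $\mu\in\Lambda^*$) equals $\ft{\chi_\Omega}(\mu)/d(\Lambda)$, where $d(\Lambda)$ is the covolume of $\Lambda$. Consequently $E_{\Lambda^*}$ is orthogonal precisely when every nonconstant Fourier coefficient of $P$ vanishes, i.e. when $P\equiv\Abs{\Omega}/d(\Lambda)$ almost everywhere. Since $P$ is integer-valued and $\Omega$ is a nonempty open set (so the $\lambda=0$ term forces $P\ge 1$ on the positive-measure set $\Omega$), this constant must be a positive integer $L\ge 1$, and $\Abs{\Omega}=L\,d(\Lambda)$. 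Thus orthogonality alone already yields an $L$-fold covering by translates of $\Omega$; the genuine tiling is exactly the case $L=1$.

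The heart of the proof is therefore to show that the basis (completeness) property selects exactly $L=1$. When $L=1$ the tiling makes $\Omega$ a fundamental domain up to a null set, so the periodization map is an isometric isomorphism $L^2(\Omega)\to L^2(\RR^d/\Lambda)$ carrying each $e_\mu|_\Omega$ to the corresponding character; since the characters form a basis on the torus, $E_{\Lambda^*}$ is complete, hence an orthogonal basis. For the converse I would argue contrapositively: if $L\ge 2$, then almost every fiber $(x+\Lambda)\cap\Omega$ contains at least two points, and a measurable choice of two ``sheets'' lets me build a nonzero $f\in L^2(\Omega)$ whose $\Lambda$-periodization vanishes (assign $+1$ and $-1$ on the two sheets). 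Because $\inner{f}{e_\mu}_\Omega$ is the $\mu$-th Fourier coefficient of that periodization, all these inner products vanish, so $E_{\Lambda^*}$ is not complete. Combining the two directions, $E_{\Lambda^*}$ is an orthogonal basis if and only if $P$ is constant and that constant equals $1$, which is exactly the tiling condition $\Omega+\Lambda=\RR^d$.

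The main obstacle I anticipate is the completeness analysis in the case $L\ge 2$: turning an ``$L$-fold covering'' into an honest non-completeness obstruction requires a measurable decomposition of $\Omega$ into sheets over the torus and care that the resulting $f$ lies in $L^2(\Omega)$ and is genuinely nonzero. A preliminary reduction to $\Lambda=\ZZ^d$ by a linear change of variables (under which tiling, passage to the dual lattice, and orthogonality all transform compatibly) can streamline the bookkeeping, but the measure-theoretic selection is where the real work lies; the Fourier-coefficient identifications in the earlier steps are routine.
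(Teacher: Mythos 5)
Your argument is correct: reducing both sides to the statement that the $\Lambda$-periodization $P=\sum_{\lambda\in\Lambda}\chi_\Omega(\cdot-\lambda)$ is a.e.\ constant (orthogonality, via the identity $\inner{e_\mu}{e_\nu}_\Omega=\ft{\chi_\Omega}(\nu-\mu)$ and the unfolding of Fourier coefficients over $\RR^d/\Lambda$) and then using integrality of $P$ plus the two-sheet cancellation function to show completeness forces the constant to be $1$ is a complete and standard proof; the measurable selection of sheets you flag as the delicate point is handled by the usual ``first lattice translate landing in $\Omega$'' enumeration. There is nothing in the paper to compare against: the theorem is quoted from Fuglede with a citation and the remark that the lattice case is ``easy to show,'' and no proof is given in the text, so your periodization argument stands on its own and is essentially the classical one.
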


Note that in Fuglede's Conjecture no relation is claimed between
the translation set $L$ and the spectrum $\Lambda$.

Conjecture \ref{conj:fuglede} in its full generality was recently
disproved. First, T. Tao showed \cite{tao:fuglede-down} that in
$\RR^5$ there exists a spectral set, which however fails to tile
space. The method, roughly speaking, is to construct
counterexamples in finite groups, and then "lift them up" first to
$\ZZ^d$ and finally to $\RR^d$. Soon after that breakthrough,
Tao's construction was further sharpened to provide non-tiling
spectral sets in $\RR^4$ \cite{Mated4} and finally even in
dimension $3$ \cite{MM:hadamard}.

Furthermore, the converse implication was also disproved, first in
dimension 5 by Kolountzakis and Matolcsi
\cite{MM:fuglede-converse}. Subsequently, examples of tiling, but
non-spectral sets were constructed in $\RR^4$ by Farkas and
R\'ev\'esz \cite{FR}, and then even in $\RR^3$ by Farkas, Matolcsi
and M\'ora \cite{FMM}.

Positive results are far more meager, and basically restrict to
special sets on the real line. However, for \emph{planar convex
domains}, it also holds true \cite{IKT}.

As for application of spectrality for estimating the Tur\'an
constant, essentially the following was proved in
\cite{kolountzakis:turan}. (Actually, the possibility of getting
this version from the same proof, appears only in
\cite{kolountzakis:groups}.)

\begin{theorem}[Kolountzakis-R\'ev\'esz]\label{th:spectral-infinite}
If $H$ is a bounded open set in $\RR^d$ which is spectral, then
for the difference set $\Om=H-H$ we have ${\cal T}_{\RR^d}(\Om) =
\Abs{H}$.
\end{theorem}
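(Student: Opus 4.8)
The plan is to prove the two inequalities $\Tu_{\RR^d}(\Om)\ge\Abs{H}$ and $\Tu_{\RR^d}(\Om)\le\Abs{H}$ separately, the first by exhibiting a near-extremal function and the second by exploiting the orthonormal basis supplied by spectrality. For the lower bound I would take the autocorrelation $f_0:=\chi_H*\widetilde{\chi_H}$, where $\widetilde{\chi_H}(x)=\overline{\chi_H(-x)}=\chi_{-H}(x)$. Then $\ft{f_0}=|\ft{\chi_H}|^2\ge 0$, so $f_0$ is continuous (a convolution of two $L^2$ functions) and positive definite, and $\supp f_0\subseteq\overline H-\overline H=\overline\Om$. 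Since $f_0(0)=\Norm{\chi_H}_{L^2(H)}^2=\Abs{H}$ and $\int f_0=\ft{f_0}(0)=|\ft{\chi_H}(0)|^2=\Abs{H}^2$, the ratio $\int f_0/f_0(0)$ equals $\Abs{H}$. As $\supp f_0$ is only a subset of $\overline\Om$ rather than a compact subset of the open $\Om$, I would pass to the classes of Theorem \ref{th:equivalences} (so that $f_0\in\FF_1(\Om)$, or approximate $f_0$ by dilates $f_0(\cdot/(1+\e))$ after a tiny shrinking of $H$) and invoke the equivalence of the four Tur\'an constants to conclude $\Tu_{\RR^d}(\Om)\ge\Abs{H}$.

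For the upper bound let $f\in\FF(\Om)$; by the remark following \eqref{Fcontcompact} I may assume $f$ is real, and $f$ is continuous, positive definite and compactly supported in $\Om=H-H$, so $\ft f\ge 0$ and $\ft f\in L^1$. Because $\int f=\ft f(0)$ and $f(0)=\int\ft f$, the assertion $\Tu_{\RR^d}(\Om)\le\Abs{H}$ is exactly the pointwise-versus-integral inequality $\ft f(0)\le\Abs{H}\int\ft f$, i.e. $\int f\le\Abs{H}\,f(0)$. I find it clarifying to read the right-hand side as a trace: the kernel $(x,y)\mapsto f(x-y)$ defines a positive semidefinite integral operator $K$ on $L^2(H)$ whose diagonal is constantly $f(0)$, so $\operatorname{tr}K=\Abs{H}\,f(0)$. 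The upper bound is therefore equivalent to $\int f\le\operatorname{tr}K$, and the content of the theorem is that the global integral of $f$ is controlled by the trace of this localized operator — something that can only hold because $\supp f$ is confined to $H-H$ and because $H$ carries the rigid harmonic structure of a spectral set.

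Here spectrality enters through its Fourier-side reformulation. Writing $e_\lambda(x)=\exp(2\pi i\inner\lambda x)$, the hypothesis that $E_\Lambda=\Set{e_\lambda:\lambda\in\Lambda}$ is an orthogonal basis of $L^2(H)$ says that $\ft{\chi_H}(\lambda-\mu)=\inner{e_\mu}{e_\lambda}_H$ vanishes for $\lambda\ne\mu$ (orthogonality) and, together with completeness and $\Norm{e_\lambda}_H^2=\Abs{H}$, is equivalent to the tiling identity $\sum_{\lambda\in\Lambda}|\ft{\chi_H}(\xi-\lambda)|^2=\Abs{H}^2$ for a.e. $\xi$. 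Equivalently, the reproducing kernel $k(\xi,\eta)=\ft{\chi_H}(\xi-\eta)$ of the Paley--Wiener space $\mathrm{PW}(H)$ (with $k(\eta,\eta)=\Abs{H}$) makes the samples $\Set{k(\cdot,\lambda):\lambda\in\Lambda}$ a tight frame of constant $\Abs{H}$. The strategy is then to reduce $f$ to an autocorrelation: for any $w\in L^2(H)$ the function $w*\widetilde w$ is positive definite, supported in $H-H$, and satisfies $\int(w*\widetilde w)/(w*\widetilde w)(0)=|\int_H w|^2/\Norm{w}_{L^2(H)}^2\le\Abs{H}$ by Cauchy--Schwarz, with equality for $w=\chi_H$. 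Thus on autocorrelations the bound is immediate, and the task is to obtain it for every positive definite $f$. I would do this by replacing the (unavailable) continuous factorization with the discrete one provided by the tight frame: expanding $\operatorname{tr}K=\Abs{H}^{-1}\sum_\lambda\inner{Ke_\lambda}{e_\lambda}_H$ and using the tiling identity to handle the coefficients $\inner{f}{e_\lambda}_H$ of $f|_H$, one is led to a Bessel/Cauchy--Schwarz estimate over $\Lambda$ that plays the role of the half-body factorization.

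I expect the genuine obstacle to be precisely this last bridge. For a centrally symmetric convex $\Om$ with $H=\tfrac12\Om$ the Krein/Boas--Kac theorem guarantees a factorization $f=w*\widetilde w$ with $\supp w\subseteq H$, and then the upper bound is nothing more than Cauchy--Schwarz; but for a general (possibly non-convex) spectral $H$ no such factorization need exist, and one must instead extract the inequality $\int f\le\Abs{H}\,f(0)$ directly from the tight-frame/tiling identity together with the positivity $\ft f\ge 0$. Getting the correct accounting of the weight $f_0=\chi_H*\widetilde{\chi_H}$ — which equals the constant $\Abs{H}$ only ``on average over $\Lambda$'' rather than pointwise — is the crux, and the spectral tiling is exactly what converts that average into the sharp constant. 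A secondary, routine technical point is the boundary behaviour (both $f_0$ and the extremizers are supported up to $\partial\Om$), which I would dispatch through the equivalence Theorem \ref{th:equivalences} and a standard shrinking/limiting argument.
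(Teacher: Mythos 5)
First, a caveat on the comparison itself: the paper states Theorem \ref{th:spectral-infinite} explicitly ``without proofs'' and merely cites \cite{kolountzakis:turan} and \cite{kolountzakis:groups}, so there is no in-paper argument to measure you against; I am comparing your proposal with the argument of the cited source. Your lower bound is correct in substance. The only point to repair is that $\supp(\chi_H*\widetilde{\chi_H})=\overline{H-H}=\overline\Om$ is not contained in $\Om$, so this function lies in none of the four classes of Theorem \ref{th:equivalences}; the standard fix is to use $\chi_A*\widetilde{\chi_A}$ for compact $A\Subset H$ with $\Abs{A}>\Abs{H}-\e$, whose support is a compact subset of the open set $H-H$, giving $\Tu_{\RR^d}(\Om)\ge\Abs{A}\to\Abs{H}$. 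This is essentially your ``shrinking'' remark, so no real objection here.

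The upper bound, however, has a genuine gap, and you flag it yourself: the ``last bridge'' from the tight-frame/tiling identity to $\int f\le\Abs{H}f(0)$ is exactly the content of the theorem, and the mechanism you propose cannot supply it. Computing the trace of $K$ in the orthogonal basis $E_\Lambda$ gives $\inner{Ke_\lambda}{e_\lambda}_H=\ft{(fw)}(\lambda)$ with $w=\chi_H*\widetilde{\chi_H}$, and the resulting identity $\sum_{\lambda}\ft{(fw)}(\lambda)=\Abs{H}^2f(0)$ is nothing but the tiling identity integrated against $\ft f\ge 0$: it holds automatically and carries no information beyond spectrality itself. Discarding all but one nonnegative term yields only $\int fw\le\Abs{H}^2f(0)$, which is strictly weaker than $\int f\le\Abs{H}f(0)$, since $w\le\Abs{H}$ with equality only at $0$ and $w\to 0$ at $\partial(H-H)$; no Bessel or Cauchy--Schwarz estimate over $\Lambda$ recovers the lost factor. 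The missing idea in the cited proof is to read the tiling identity $\sum_{\lambda}\Abs{\ft{\chi_H}(\xi-\lambda)}^2=\Abs{H}^2$ on the other side of the Fourier transform: it says $w\cdot\ft{\delta_\Lambda}=\Abs{H}^2\delta_0$ as tempered distributions ($\delta_\Lambda$ is tempered because a spectrum has uniformly bounded density), and since $w$ is continuous, strictly positive exactly on the open set $H-H$, and $w(0)=\Abs{H}$, one concludes $\ft{\delta_\Lambda}=\Abs{H}\,\delta_0$ on $H-H$. Pairing this with $f$, whose support is a compact subset of $H-H$ (a smoothing and cut-off argument, as in the proof of Theorem \ref{th:diff-infinite}, makes the pairing legitimate for merely continuous $f$), yields the Poisson-type summation formula $\sum_{\lambda\in\Lambda}\ft f(\lambda)=\Abs{H}f(0)$; normalizing $0\in\Lambda$ and using $\ft f\ge 0$ on all of $\Lambda$ then gives $\int f=\ft f(0)\le\sum_{\lambda}\ft f(\lambda)=\Abs{H}f(0)$. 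This distributional Poisson summation over the spectrum is the step your outline lacks, and without it the proposal does not constitute a proof of the upper bound.
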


Originally, we formulated in \cite{kolountzakis:turan} only the
following special case of the above result.
\begin{corollary}[Kolountzakis-R\'ev\'esz]\label{th:spectralRd}
Let $\Omega \subseteq \RR^d$ be a convex domain. If $\Omega$ is
spectral, then it has to be a Stechkin-Tur\'an domain as well.
\end{corollary}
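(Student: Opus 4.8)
The plan is to derive the corollary directly from Theorem \ref{th:spectral-infinite} by exhibiting $\Omega$ as the difference set of its own half-body. The essential observation is that a $0$-symmetric convex domain satisfies $\Omega = H - H$ with $H := \frac12\Omega$. One inclusion is immediate: for $x,y\in\Omega$, central symmetry gives $-y\in\Omega$, and convexity then yields $\frac12 x + \frac12(-y)=\frac12(x-y)\in\Omega$, so $H-H\subseteq\Omega$. For the reverse inclusion, any $z\in\Omega$ may be written $z=\frac12 z-\frac12(-z)$ with both $z,-z\in\Omega$, whence $z\in H-H$. This is precisely where convexity together with symmetry is used in an essential way; for a general set $\frac12\Omega-\frac12\Omega$ need not coincide with $\Omega$.

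Second, I would check that spectrality passes from $\Omega$ to $H=\frac12\Omega$ under the obvious rescaling of the spectrum. If $(\Omega,\Lambda)$ is a spectral pair, then the substitution $x=\frac12 y$ in the inner product $\inner{e_\mu}{e_{\mu'}}_{H}$ turns integrals over $H$ into integrals over $\Omega$ (up to the constant factor $2^{-d}$), so that the orthogonality and completeness of the exponential system on $L^2(H)$ with frequencies in $2\Lambda$ correspond exactly to those of $E_\Lambda$ on $L^2(\Omega)$. Hence $(H,2\Lambda)$ is a spectral pair, and $H$ is a bounded open spectral set.

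With these two reductions in hand, Theorem \ref{th:spectral-infinite} applies verbatim to $H$ and delivers the exact value
$\Tu_{\RR^d}(\Omega)=\Tu_{\RR^d}(H-H)=\Abs{H}=\Abs{\tfrac12\Omega}=\Abs{\Omega}/2^d$,
where the last equality is the scaling of Lebesgue measure under dilation by $\frac12$ in $\RR^d$. This is exactly the Stechkin-Tur\'an identity, so $\Omega$ is a Stechkin-Tur\'an domain. Note that Theorem \ref{th:spectral-infinite} already produces the equality directly, so the standard lower bound $\Tu_{\RR^d}(\Omega)\ge\Abs{\Omega}/2^d$ coming from the self-convolution of $\chi_{H}$ need not be invoked separately.

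The only genuine content is carried by Theorem \ref{th:spectral-infinite}, which we are assuming. The corollary itself presents no real obstacle; the single mildly delicate point is the verification that convexity is exactly what guarantees $\Omega=\frac12\Omega-\frac12\Omega$, thereby letting $H$ play the role of the spectral set demanded by the theorem.
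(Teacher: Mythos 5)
Your argument is correct and follows essentially the same route as the paper: take $H=\tfrac12\Omega$, note $H-H=\Omega$ and that $H$ is spectral with spectrum $2\Lambda$, then invoke Theorem \ref{th:spectral-infinite} together with $|H|=|\Omega|/2^d$. The only point the paper makes that you pass over is that the hypothesis does not assume symmetry of the convex domain: the paper first cites the result of \cite{nonsym} that non-symmetric convex domains are never spectral, which justifies your standing assumption that $\Omega$ is $0$-symmetric.
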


\begin{proof}
First let us note that convex spectral domains are necessarily
symmetric according to the result in \cite{nonsym}. Let now
$\Omega$ be a symmetric convex domain. Then taking $H:=\frac12
\Omega$, we have $H-H=\Omega$. Moreover, if $\Omega$ is spectral,
say with spectrum $\Lambda$, then also $H$ is clearly spectral
with the dilated spectrum $2\Lambda$. So Theorem
\ref{th:spectral-infinite} applies and we are done, in view of
$|H|=|\frac12 \Omega|=|\Omega|/2^d$.
\end{proof}

\begin{corollary}[Arestov-Berdysheva]\label{cor:main}
Suppose the symmetric convex domain $\Omega\subseteq\RR^d$ is a
translational tile. Then it is a Stechkin-Tur\' an domain.
\end{corollary}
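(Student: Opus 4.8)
The plan is to reduce the statement to the spectral criterion of Corollary \ref{th:spectralRd}, which already asserts that a convex spectral domain is Stechkin-Tur\'an. Thus it suffices to prove that a symmetric convex translational tile is spectral; feeding this into Corollary \ref{th:spectralRd} then yields $\Tu_{\RR^d}(\Omega)=|\Omega|/2^d$ at once, so nothing further needs to be said about the lower bound, which is already built into that corollary.

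The first step is to strengthen the hypothesis from an arbitrary tiling to a lattice tiling. We are only given that $\Omega+L=\RR^d$ for some translation set $L$, with no lattice structure assumed. For convex bodies, however, this can be upgraded: by the classical theory of parallelohedra (the Venkov--Aleksandrov--McMullen theorem), every convex body that tiles $\RR^d$ by translations in fact tiles by a lattice. I would therefore first invoke this result to replace $L$ by a lattice $\Lambda$ with $\Omega+\Lambda=\RR^d$ a tiling.

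The second step converts lattice tiling into spectrality, and here I would simply quote the easy lattice case of Fuglede's conjecture recorded above as Theorem \ref{th:lattice-fuglede}: since $\Omega$ is a bounded open domain tiling by the lattice $\Lambda$, the exponential system $E_{\Lambda^*}$ attached to the dual lattice is an orthogonal basis of $L^2(\Omega)$, so $\Omega$ is spectral with spectrum $\Lambda^*$. At this point $\Omega$ is both convex and spectral, and Corollary \ref{th:spectralRd} closes the argument.

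I expect the only genuinely nontrivial obstacle to be the first step. The geometric fact that convex translational tiles must be lattice tiles is a deep theorem, and it is exactly what allows one to sidestep the full Fuglede conjecture -- false in general -- and rely only on its elementary lattice direction. The central-symmetry hypothesis causes no difficulty and is in a sense redundant: convex translational tiles are automatically centrally symmetric, and in any case Corollary \ref{th:spectralRd} already absorbs symmetry through the cited characterization of convex spectral domains.
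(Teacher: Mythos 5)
Your proposal is correct and follows essentially the same route as the paper's own proof: invoke the Venkov--McMullen theorem to upgrade the tiling to a lattice tiling, deduce spectrality from the lattice case of Fuglede's conjecture (Theorem \ref{th:lattice-fuglede}), and conclude via Corollary \ref{th:spectralRd}. Your remarks on the redundancy of the symmetry hypothesis and on where the real depth lies match the paper's accompanying remarks as well.
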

\begin{proof}[Proof of Corollary \ref{cor:main}] We start with the
following result which claims that every convex tile is also a
lattice tile.
\begin{theorem}[Venkov \cite{venkov} and McMullen
\cite{mcmullen}]\label{th:venkov} Suppose that a convex body $K$
tiles space by translation. Then it is necessarily a symmetric
polytope and there is a lattice $L$ such that
$$
K+L = \RR^d.
$$
\end{theorem}
A complete characterization of the tiling polytopes is also among
the conclusions of the Venkov-McMullen Theorem but we do not need
it here and choose not to give the full statement as it would
require some more definitions.

So, if a convex domain is a tile, it is also a lattice tile, hence
spectral by Theorem \ref{th:lattice-fuglede}, and as such it is
Stechkin-Tur\'an, by Corollary \ref{th:spectralRd}. \end{proof}

\begin{remark} If one wants to avoid using the
Venkov-McMullen theorem in the proof of Corollary \ref{cor:main}
one should enhance the assumption of Corollary \ref{cor:main} to
state that $\Omega$ is a lattice tile. Arestov and Berdysheva in
\cite{arestov:tiles} prove Corollary \ref{cor:main} without going
through spectral domains.
\end{remark}

The result of \cite{hexagon} about the hexagon being a
Stechkin-Tur\' an domain is thus a special case of our Corollary
\ref{cor:main}, but not the result in \cite{Siegel} and
\cite{gorbachev:ball} about the ball being Stechkin-Tur\'an type.
The ball, and essentially every smooth convex body
\cite{IKT:smooth}, is known not to be spectral, in accordance with
the Fuglede Conjecture.

\subsection{The notion of uniform asymptotic upper density on LCA groups
}\label{sec:density}

\noindent First let us recall the frequently used definition of
asymptotic uniform upper density in $\RR^d$. Let $K\subset\RR^d$
be a \emph{fat body}, i.e. a set with $0\in {\rm int} K$,
$K=\overline{\rm int K}$ and $K$ compact. Then uniform asymptotic
upper density in $\RR^d$ with respect to $K$ is defined as
\begin{equation}\label{RUdensity}
\overline{D}_{K}(A) := \limsup_{r\to\infty} \frac{\sup_{x\in\RR^d}
|A\cap (rK+x)|}{|rK|}~.
\end{equation}
It is obvious that the notion is translation invariant. It is also
well-known, that $\overline{D}_{K}(A)$ gives the same value for
all nice - e.g. for all convex - bodies $K\subset \RR^d$, although
this fact does not seem immediate from the formulation.

Note also the following ambiguity in the use of densities in
literature. Sometimes even in continuous groups a discrete set
$\Lambda$ is considered in place of $A$, and then the definition
of the asymptotic upper density is
\begin{equation}\label{RUNdensity}
\overline{D}^{\#}_{K}(A) := \limsup_{r\to\infty}
\frac{\sup_{x\in\RR^d} {\rm \#} (\Lambda \cap (rK+x))}{|rK|}~.
\end{equation}

That motivates the general definition of asymptotic uniform upper
densities of {\em measures}, say measure $\nu$ with respect to
measure $\mu$, whether equal or not. E.g. in \eqref{RUNdensity}
$\nu:={\rm \#}$ is the cardinality or counting measure, while
$\mu:=|\cdot|$ is just the volume. The general formulation in
$\RR^d$ is thus
\begin{equation}\label{Rnu-density}
\overline{D}_{K}(\nu):= \limsup_{r\to\infty}
\frac{\sup_{x\in\RR^d}\nu(rK+x)}{|rK|}~.
\end{equation}

Two notions of asymptotic uniform upper densities of measures
$\nu$ with respect to a translation invariant, nonnegative,
locally finite (outer) measure $\mu$ were defined in general LCA
groups in \cite{R}. Considering such groups are natural for they
have an essentially unique translation invariant Haar measure
$\mu_G$ (see e.g. \cite{rudin:groups}), what we fix to be our
$\mu$. By construction, $\mu$ is a Borel measure, and the sigma
algebra of $\mu$-measurable sets is just the sigma algebra of
Borel mesurable sets, denoted by $\BB$ throughout. To avoid
questions of infinite measure, we consider the subset $\BB_0$ of
Borel measurable sets having compact closure.

Note if we consider the discrete topological structure on any
abelian group $G$, it makes $G$ a LCA group with Haar measure
$\mu_G={\rm \#}$, the counting measure. This is the natural
structure for $\ZZ^d$, e.g. On the other hand all $\sigma$-finite
groups admit the same structure as well, unifying considerations.
(Note that e.g. $\ZZ^d$ is not a $\sigma$-finite group since it is
{\em torsion-free}, i.e. has no finite subgroups.)

The other measure $\nu$ can be defined, e.g., as the {\em trace}
of $\mu$ on the given set $A$, that is,
$\nu(H):=\nu_A(H):=\mu_G(H\cap A)$, or can be taken as the
counting measure of the points included in some set $\Lambda$
derived from the cardinality measure similarly:
$\gamma(H):=\gamma_{\Lambda}(H):={\rm \#} (H\cap \Lambda)$.

\begin{definition}\label{compactdensity}
Let $G$ be a LCA group and $\mu:=\mu_G$ be its Haar measure. If
$\nu$ is another measure on $G$ with the sigma algebra of
measurable sets being ${\mathcal S}$, then we define
\begin{equation}\label{Cnudensity}
\overline{D}(\nu;\mu) := \inf_{C\Subset G} \sup_{V\in {\mathcal S}
\cap {\mathcal B_0}} \frac{\nu(V)}{\mu(C+V)}~.
\end{equation}
In particular, if $A\subset G$ is Borel measurable and $\nu=\mu_A$
is the trace of the Haar measure on the set $A$, then we get
\begin{equation}\label{CAdensity}
\overline{D}(A) :=\overline{D}(\nu_A;\mu) := \inf_{C\Subset G}
\sup_{V\in \mathcal B_0} \frac{\mu(A\cap V)}{\mu(C+V)}~.
\end{equation}
If $\Lambda\subset G$ is any (e.g. discrete) set and $\gamma
:=\gamma_\Lambda:=\sum_{\lambda\in\Lambda} \delta_{\lambda}$ is
the counting measure of $\Lambda$, then we get
\begin{equation}\label{CLdensity}
\overline{D}^{\#}(\Lambda) :=\overline{D}(\gamma_{\Lambda};\mu) :=
\inf_{C\Subset G} \sup_{V\in {\mathcal B_0}} \frac{{\rm \#
}(\Lambda\cap V)}{\mu(C+V)}~.
\end{equation}
\end{definition}

\begin{proposition}\label{prop:densequivalence}
Let $K$ be any convex body in $\RR^d$ and normalize the Haar
measure of $\RR^d$ to be equal to the volume $|\cdot|$. Let $\nu$
be any measure with sigma algebra of measurable sets $\mathcal S$.
Then we have
\begin{equation}\label{Rd-equivalance}
\overline{D}(\nu;|\cdot|) = \overline{D}_K(\nu)~.
\end{equation}
\end{proposition}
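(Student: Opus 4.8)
The plan is to establish the two inequalities $\overline{D}(\nu;\Abs{\cdot})\le\overline{D}_K(\nu)$ and $\overline{D}_K(\nu)\le\overline{D}(\nu;\Abs{\cdot})$ separately. Write $\alpha:=\overline{D}(\nu;\Abs{\cdot})$ for the infimum-form density and $\beta:=\overline{D}_K(\nu)$ for the limsup-form density; both may be infinite, in which case the corresponding inequality is trivial, so I assume each is finite when proving the relevant bound. Since $r(K+a)+x=rK+(ra+x)$ and $x\mapsto ra+x$ is a bijection of $\RR^d$, one has $\sup_x\nu(r(K+a)+x)=\sup_x\nu(rK+x)$ and $\Abs{r(K+a)}=\Abs{rK}$, so $\overline{D}_{K+a}(\nu)=\overline{D}_K(\nu)$; hence I may translate $K$ so that $0\in\operatorname{int}K$ and fix $\rho>0$ with $B(0,\rho)\subseteq K$. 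Note also that $\nu(V)<\infty$ for every $V\in{\mathcal S}\cap\BB_0$, since $V$ is relatively compact and $\nu$ is locally finite.

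For $\alpha\le\beta$ the naive idea — cover an arbitrary $V$ by translates of $rK$ and sum the density bound over the cover — loses a bounded-overlap constant and is too lossy. Instead I will use the exact Tonelli identity
\[
\int_{\RR^d}\nu\pp{V\cap(rK+x)}\,dx=\Abs{rK}\,\nu(V),
\]
valid for every $V\in{\mathcal S}\cap\BB_0$ because, writing $\nu(V\cap(rK+x))=\int_{\RR^d}\One{y\in V}\One{y-x\in rK}\,d\nu(y)$, the integrand in $(x,y)$ is jointly measurable and nonnegative and $\int_{\RR^d}\One{y-x\in rK}\,dx=\Abs{rK}$. Given $\e>0$, choose $r_0$ with $\sup_x\nu(rK+x)\le(\beta+\e)\Abs{rK}$ for all $r\ge r_0$, as permitted by the definition of $\beta$ as a limsup. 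The integrand is then bounded by $(\beta+\e)\Abs{rK}$ and vanishes unless $(rK+x)\cap V\neq\emptyset$, i.e. unless $x\in V-rK$. Taking $C:=-rK$, which is compact, gives
\[
\Abs{rK}\,\nu(V)=\int_{V+C}\nu\pp{V\cap(rK+x)}\,dx\le(\beta+\e)\Abs{rK}\,\Abs{V+C},
\]
so $\nu(V)/\Abs{C+V}\le\beta+\e$ for every $V$. Taking the supremum over $V$ and then the infimum over $C$ (this single choice of $C$ suffices) yields $\alpha\le\beta+\e$, and $\e\to0$ gives $\alpha\le\beta$.

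For the reverse inequality $\beta\le\alpha$ I will test the infimum-form density on the special sets $V=rK+x$. Given $\de>0$, pick by the definition of $\alpha$ a compact $C_\de$ with $\sup_V\nu(V)/\Abs{C_\de+V}\le\alpha+\de$. Applying this to $V=rK+x$ and using translation invariance of Haar measure gives $\nu(rK+x)\le(\alpha+\de)\Abs{C_\de+rK}$ for all $x$ and all $r$, hence
\[
\frac{\sup_x\nu(rK+x)}{\Abs{rK}}\le(\alpha+\de)\,\frac{\Abs{C_\de+rK}}{\Abs{rK}}.
\]
It remains to check $\Abs{C_\de+rK}/\Abs{rK}\to1$ as $r\to\infty$, and here convexity enters: fixing $R_0$ with $C_\de\subseteq B(0,R_0)\subseteq (R_0/\rho)K$ and using $aK+bK=(a+b)K$ for $a,b\ge0$ on the convex set $K$, one gets $C_\de+rK\subseteq (R_0/\rho)K+rK=(r+R_0/\rho)K$, so $1\le\Abs{C_\de+rK}/\Abs{rK}\le(1+R_0/(\rho r))^d\to1$. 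Letting $r\to\infty$ gives $\beta\le\alpha+\de$, and $\de\to0$ finishes the argument.

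I expect the first inequality to be the delicate step: the only way to make the covering tight is to route it through the exact integral identity above, the payoff being that the fattening $\Abs{C+V}$ built into the infimum-form density absorbs precisely the overlap that a crude union bound would waste. The second inequality is a soft volume comparison, and convexity of $K$ together with $0\in\operatorname{int}K$ is used only there, exactly to ensure that enlarging $rK$ by a fixed compact set is asymptotically negligible in measure.
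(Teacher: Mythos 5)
The paper does not actually prove Proposition \ref{prop:densequivalence}: it states it and defers the proof (together with the heuristics and examples) to the companion preprint \cite{R}. So there is no in-paper argument to compare yours against line by line; judged on its own, your proof is correct and self-contained. The direction $\overline{D}(\nu;|\cdot|)\le\overline{D}_K(\nu)$ is the genuinely nontrivial one, and your route through the exact Tonelli identity $\int\nu(V\cap(rK+x))\,dx=|rK|\,\nu(V)$, combined with the observation that the integrand is supported on $V-rK=C+V$ for $C:=-rK$, is exactly the right mechanism: the fattening by $C$ in the denominator of \eqref{Cnudensity} is what absorbs the overlap that a covering argument would waste, and a single compact $C$ suffices to bound the infimum. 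The reverse direction correctly tests the infimum-form density on $V=rK+x$ and uses convexity only to get $|C_\de+rK|/|rK|\to1$ via $aK+bK=(a+b)K$; your preliminary reduction to $0\in\operatorname{int}K$ by translation invariance of $\overline{D}_K$ is also needed and correctly justified. The only caveats are the measurability conventions (that $\mathcal S$ contains the Borel sets so that $rK+x$ is $\nu$-measurable and the joint integrand is product-measurable, and that $|C+V|$ is read as outer measure for the analytic set $C+V$), but these are exactly the conventions the paper itself adopts implicitly in Definition \ref{compactdensity}, so they are not gaps attributable to you.
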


The same statement applies also to $\ZZ^d$. For heuristical
considerations and comparisons to existing notions and approaches,
as well as for the proofs and for some examples we refer to
\cite{R}.

\subsection{Packing, covering, tiling and uniform asymptotic upper
density}\label{sec:packcoverdens}

\begin{proposition}\label{prop:packdens} Assume that $H\in
\BB$ and that $H+\Lambda\leq G$ ($H$ packs $G$ with
$\Lambda\subset G$), i.e. $(H-H)\cap(\Lambda-\Lambda)\subseteq
\{0\}$. Then $\Lambda$ must satisfy
$\overline{D}^{\#}(\Lambda)\leq 1/\mu(H)$.
\end{proposition}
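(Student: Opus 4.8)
The plan is to convert the packing hypothesis into a statement about disjoint translates and then simply count how many translates can fit inside a window. First I would record that the condition $(H-H)\cap(\Lambda-\Lambda)=\{0\}$ says precisely that the translates $\{H+\lambda:\lambda\in\Lambda\}$ are pairwise disjoint: if $(H+\lambda_1)\cap(H+\lambda_2)\neq\emptyset$ for some $\lambda_1\neq\lambda_2$ in $\Lambda$, then $\lambda_1-\lambda_2$ would be a nonzero element of $(H-H)\cap(\Lambda-\Lambda)$. This disjointness is inherited by any compact $K\subseteq H$, since $K-K\subseteq H-H$.

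The core is a counting estimate. Fixing a compact $K\subseteq H$ and an arbitrary window $V\in\mathcal{B}_0$, I would observe that $K+\lambda\subseteq K+V$ for every $\lambda\in\Lambda\cap V$, while translation invariance gives $\mu(K+\lambda)=\mu(K)$. The sets $K+\lambda$ ($\lambda\in\Lambda\cap V$) are pairwise disjoint Borel sets sitting inside the relatively compact set $K+V$, which has finite Haar measure because $\mu$ is locally finite; hence countable additivity of $\mu$ on the Borel $\sigma$-algebra yields
$$\#(\Lambda\cap V)\,\mu(K)=\sum_{\lambda\in\Lambda\cap V}\mu(K+\lambda)=\mu\Big(\bigsqcup_{\lambda\in\Lambda\cap V}(K+\lambda)\Big)\leq\mu(K+V).$$
In particular $\#(\Lambda\cap V)<\infty$ whenever $\mu(K)>0$, so the left-hand side is meaningful, and rearranging gives $\dfrac{\#(\Lambda\cap V)}{\mu(K+V)}\leq\dfrac{1}{\mu(K)}$ for every admissible $V$.

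I would then feed this into the definition of $\overline{D}^{\#}$. Since $K$ is compact we have $K\Subset G$, so taking $C=K$ in \eqref{CLdensity} gives
$$\overline{D}^{\#}(\Lambda)=\inf_{C\Subset G}\sup_{V\in\mathcal{B}_0}\frac{\#(\Lambda\cap V)}{\mu(C+V)}\leq\sup_{V\in\mathcal{B}_0}\frac{\#(\Lambda\cap V)}{\mu(K+V)}\leq\frac{1}{\mu(K)}.$$
Finally I would let $K$ exhaust $H$: by inner regularity of the Haar measure there are compact sets $K\subseteq H$ with $\mu(K)\to\mu(H)$ (and with $\mu(K)$ arbitrarily large in the degenerate case $\mu(H)=\infty$, where the asserted bound is $0$), and passing to the limit yields $\overline{D}^{\#}(\Lambda)\leq 1/\mu(H)$.

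The counting inequality itself is routine; the one point demanding care is that $H$ is assumed only Borel, not relatively compact, so one cannot legitimately take $C=H$ in the infimum over $C\Subset G$. This is exactly what the inner-regularity approximation by compact $K\subseteq H$ is designed to handle. I would also be careful to apply additivity to the genuinely disjoint Borel translates rather than to invoke superadditivity of the outer measure $\mu$, which fails in general.
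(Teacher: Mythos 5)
Your proof is correct and follows essentially the same route as the paper: approximate $H$ from inside by compact sets, use disjointness of the translates $K+\lambda$ ($\lambda\in\Lambda\cap V$) inside $K+V$ together with additivity and translation invariance of the Haar measure to get $\#(\Lambda\cap V)\,\mu(K)\le\mu(K+V)$, and then observe that the infimum in the definition of $\overline{D}^{\#}(\Lambda)$ runs over a family of compact sets containing these $K$. The paper's proof is the same argument with $B\Subset H$ in place of your $K$.
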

\begin{proof} Let $B\Subset H$ and $V\in \BB_0$ be arbitrary.
Denote $L:=\Lambda\cap V$. Then $B+V\supset B+L=\cup_{\lambda\in
L} (B+\lambda)$, and this union being disjoint (as
$(B+\lambda)\cap (B+\lambda')\subset (H+\lambda)\cap (H+\lambda')
=\emptyset$ unless $\lambda=\lambda'$), from additivity and
translation invariance of the Haar measure we obtain $\mu(B+V)\geq
\mu(B+L)= \# L \mu(B)$. This yields $\# L/ \mu(B+V) \leq
1/\mu(B)$, therefore $\sup_{V\in\BB_0} \#(\Lambda \cap V)/
\mu(B+V) \leq 1/\mu(B)$. Approximating $\mu(H)$ by $\mu(B)$ of
$B\Subset H$ arbitrarily closely, we thus obtain $\inf_{B\Subset
H} \sup_{V\in\BB_0}\#(\Lambda \cap V)/ \mu(B+V) \leq 1/\mu(H)$.
However, $\overline{D}^{\#}(\Lambda)$ is a similar infimum
extended to a larger family of compact sets, so it can not be
larger, and the assertion follows.
\end{proof}

\begin{proposition}\label{prop:coverdens} Assume that $H\in\BB_0$
and that it covers $G$ with $\Lambda\subset G$ ("$H+\Lambda\geq
G$"), i.e. $H+\Lambda$ contains $\mu$-almost all points of $G$.
Then we necessarily have $\overline{D}^{\#}(\Lambda)\geq
1/\mu(H)$.
\end{proposition}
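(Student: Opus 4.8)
The plan is to prove the reverse inequality to the packing case of Proposition~\ref{prop:packdens}, but here the difficulty is reversed: since $\overline{D}^{\#}(\Lambda)$ is an \emph{infimum} over compact correction sets $C$, a lower bound must be produced for \emph{every} $C\Subset G$, not merely for one conveniently chosen one. So I would fix an arbitrary compact $C\Subset G$ and show
$\sup_{V\in\BB_0}\#(\Lambda\cap V)/\mu(C+V)\ge 1/\mu(H)$;
taking the infimum over $C$ then yields the claim. (One may assume $0<\mu(H)<\infty$: finiteness is automatic since $H\in\BB_0$ has compact closure, while the degenerate case $\mu(H)=0$ forces $\overline{D}^{\#}(\Lambda)=\infty$ and the statement is vacuous.)

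First I would localize the covering to a compact test set. Write $\widetilde H:=\overline H$, which is compact, and let $V'\subseteq G$ be any compact set of positive measure. By hypothesis $\mu$-almost every $x\in V'$ lies in some translate $H+\lambda$ with $\lambda\in\Lambda$; for such $x$ one has $\lambda\in x-H\subseteq V'-H\subseteq V'-\widetilde H=:V$, and $V$ is compact (being the continuous image of $V'\times\widetilde H$ under subtraction), hence $V\in\BB_0$. Thus the covering of almost all of $V'$ is realized using only the translates indexed by $\Lambda\cap(V'-H)\subseteq\Lambda\cap V$, so by subadditivity and translation invariance of the Haar measure
\[
\mu(V')\ \le\ \sum_{\lambda\in\Lambda\cap V}\mu(H+\lambda)\ =\ \#(\Lambda\cap V)\,\mu(H),
\]
that is, $\#(\Lambda\cap V)\ge\mu(V')/\mu(H)$.

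Next I would control the denominator. Since $C+V=(C-\widetilde H)+V'$, setting $C':=\overline{C-\widetilde H}$ (again compact) gives $\mu(C+V)\le\mu(C'+V')$. Combining the two estimates,
\[
\frac{\#(\Lambda\cap V)}{\mu(C+V)}\ \ge\ \frac{1}{\mu(H)}\cdot\frac{\mu(V')}{\mu(C'+V')}.
\]
Taking the supremum over all compact $V'$, and noting that the corresponding sets $V$ all lie in $\BB_0$, we obtain
$\sup_{V\in\BB_0}\#(\Lambda\cap V)/\mu(C+V)\ \ge\ \mu(H)^{-1}\,\sup_{V'}\mu(V')/\mu(C'+V')$.

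It then remains to see that $\sup_{V'}\mu(V')/\mu(C'+V')=1$ for the fixed compact $C'$. Since $c+V'\subseteq C'+V'$ for any $c\in C'$, the ratio never exceeds $1$; the content is the matching lower bound, namely the existence of compact sets $V'$ with $\mu(C'+V')/\mu(V')\to 1$. This is exactly the F\o lner (amenability) property of $G$, which holds for every LCA group because abelian groups are amenable; in the language of Definition~\ref{compactdensity} it is the fact that the whole group has uniform asymptotic upper density $\overline D(\mu_G;\mu_G)=1$, consistent with the density computations of \cite{R}. This F\o lner step is the only substantive obstacle — everything else is measure-theoretic bookkeeping — and once it is granted the supremum equals $1$, so the desired bound $\sup_{V\in\BB_0}\#(\Lambda\cap V)/\mu(C+V)\ge 1/\mu(H)$ follows for the arbitrary $C$, completing the proof.
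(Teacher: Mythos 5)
Your argument is correct and is essentially the paper's own proof in a slightly reorganized form: the paper also fixes an arbitrary compact $C$, takes a test set $U$ (your $V'$), sets $V=U-H$ (your $V'-\overline{H}$), counts the translates $H+\lambda$ needed to cover almost all of $U$ to get $\mu(U)\le\#(\Lambda\cap V)\,\mu(H)$, and controls $\mu(C+V)$ by the same F\o lner-type input. The one step you flag as the substantive obstacle --- the existence of compact $V'$ with $\mu(C'+V')<(1+\ve)\mu(V')$ --- is exactly what the paper imports from the theorem in \S 2.6.7 of \cite{rudin:groups}, applied to the compact set $W=\overline{H}-C$, so your appeal to amenability of LCA groups is legitimate and matches the paper's citation.
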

\begin{proof} Let $C\Subset G$ be arbitrary, and take
$W:=\overline{H}-C$, which is again a compact set of $G$ by
assumption on $H$ and in view of the continuity of the group
operation on $G$. So the Theorem in \S 2.6.7. on p. 52 of
\cite{rudin:groups} applies to the compact set $W$ and to any
given $\ve>0$, and we find some Borel measurable set $U=U_{\ve,
C}\in \BB_0$ satisfying $\mu(U-W)<(1+\ve)\mu(U)$.

Consider now $V:=V_{\ve,C}:=U-H\in \BB_0$. Then
$\mu(C+V)=\mu(C+U-H)\leq \mu(U-(\overline{H}-C))= \mu(U-W)<
(1+\ve)\mu(U)$. Denote $L:=\Lambda\cap V$. Then
$L=\{\lambda\in\Lambda~:~ \exists h\in H,\ \lambda+h\in
U\}=\{\lambda\in \Lambda~:~(\lambda+H)\cap U \ne \emptyset\}$, and
so clearly $U\cap(\Lambda+H) = \cup_{\lambda\in L}(\lambda+H)$,
while $U_0:=U\setminus (U\cap(\Lambda+H))$ is of measure zero by
assumption on the covering property of $H$ with $\Lambda$. So in
all $\mu(U)\leq  \mu(U_0) + \sum_{\lambda\in L}\mu(\lambda+H)
=0+\# L \mu(H)$ and $\mu(C+V)<(1+\ve)\# L\mu(H)$.

It follows that with the arbitrarily chosen $C\Subset G$ we have
with a certain $V_{\ve,C}\in \BB_0$
$$
\frac{\#(\Lambda\cap V_{\ve,C})}{\mu(C+V_{\ve,C})} \geq
\frac{1}{(1+\ve)\mu(H)},
$$
so taking supremum over all $V\in\BB_0$ we even get
$\sup_{V\in\BB_0} \#(\Lambda\cap V)/\mu(C+V) \geq 1/\mu(H)$. This
holding for all $C\Subset G$, taking infimum over $C$ does not
change the lower estimation, so finally we arrive at
$\overline{D}^{\#}(\Lambda)\geq 1/\mu(H)$, whence the proposition.
\end{proof}

Because tiling means simultaneously packing and covering.
Therefore, from the above two propositions the following corollary
obtains immediately.

\begin{corollary}\label{cor:tilingdens} Assume that $H\in \BB_0$
tiles with the set of translations $\Lambda\subset G$:
$H+\Lambda=G$. Then we also have $\overline{D}^{\#}(\Lambda)=
1/\mu(H)$.
\end{corollary}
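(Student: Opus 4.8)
The plan is to read the conclusion off directly from the two preceding propositions, since a tiling is by definition simultaneously a packing and a covering. First I would check that the single hypothesis $H\in\BB_0$ of the corollary supplies exactly what each proposition needs: because $\BB_0\subseteq\BB$, the set $H$ qualifies for Proposition \ref{prop:packdens}, and because $H$ has compact closure it qualifies for Proposition \ref{prop:coverdens}. So both results are applicable to the same $H$ and $\Lambda$.

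Next I would extract the packing half from the tiling relation $H+\Lambda=G$. In a tiling the translates $H+\lambda$ cover $G$ with each point covered (essentially) once; in particular distinct translates do not overlap, which is precisely the packing condition $(H-H)\cap(\Lambda-\Lambda)\subseteq\{0\}$. Proposition \ref{prop:packdens} then yields the upper bound $\overline{D}^{\#}(\Lambda)\leq 1/\mu(H)$.

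Then I would extract the covering half: a tiling certainly has $H+\Lambda$ containing $\mu$-almost every point of $G$, so the hypothesis of Proposition \ref{prop:coverdens} is met and gives the matching lower bound $\overline{D}^{\#}(\Lambda)\geq 1/\mu(H)$. Combining the two inequalities forces $\overline{D}^{\#}(\Lambda)=1/\mu(H)$, which is the assertion.

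The only point requiring a moment's care — and the nearest thing to an obstacle — is the passage from the measure-theoretic notion of tiling (almost-everywhere exactly-once covering) to the strict set-theoretic packing condition $(H-H)\cap(\Lambda-\Lambda)\subseteq\{0\}$ invoked in Proposition \ref{prop:packdens}. If tiling is understood as a genuine partition, this is immediate. If only almost-everywhere disjointness is assumed, I would note that the proof of Proposition \ref{prop:packdens} uses disjointness only to compute $\mu(B+L)=\#L\,\mu(B)$, and measure-zero overlaps leave this equality intact; hence the upper bound survives unchanged and the sandwich argument still closes.
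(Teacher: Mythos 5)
Your proposal is correct and follows exactly the paper's route: the paper's own proof consists of the single observation that tiling is simultaneously packing and covering, so Propositions \ref{prop:packdens} and \ref{prop:coverdens} give the two matching inequalities. Your extra remark on reconciling the almost-everywhere notion of tiling with the strict packing condition $(H-H)\cap(\Lambda-\Lambda)\subseteq\{0\}$ is a point the paper passes over in silence, and your resolution of it is sound.
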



\section{Upper bound from packing}
\label{sec:upper-bound-from-packing}


\subsection{Bounds from packing in some special cases}

In the type of results we now present, some kind of ``packing''
condition is assumed on $\Omega$ which leads to an upper bound for
${\cal T}_G(\Omega)$. The first result we present here is taken
from \cite{kolountzakis:groups}: we repeat it here for sake of a
simpler situation which nevertheless may shed light on the general
case.

\begin{theorem}[Kolountzakis-R\'ev\'esz]\label{th:diff}
Suppose that $G$ is a compact abelian group, $\Lambda\subseteq G$,
$\Omega\subseteq G$ is a $0$-symmetric open set and
$(\Lambda-\Lambda) \cap \Omega \subseteq \Set{0}$. Suppose also
that $f\in L^1(G)$ is a continuous positive definite function
supported on $\Omega$. Then
\begin{equation}\label{bound-from-diff}
\int_G f(x)\,dx \le {\mu(G) \over \# {\Lambda}} f(0).
\end{equation}
In other words ${\cal T}_G(\Omega) \le \mu(G) / \#{\Lambda}$.
\end{theorem}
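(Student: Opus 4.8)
\emph{Plan.} The natural approach is a double-counting argument built around the single quantity
\[
S:=\sum_{\lambda,\lambda'\in\Lambda} f(\lambda-\lambda'),
\]
which I would evaluate in two independent ways and then compare. Before that I would record that the hypothesis forces $\Lambda$ to be finite. Indeed, in the only nontrivial case $f(0)>0$, which forces $0\in\supp f\subseteq\Om$, so by continuity of subtraction on $G$ one may pick a symmetric open neighborhood $U\ni 0$ with $U-U\subseteq\Om$. Then the translates $\lambda+U$ ($\lambda\in\Lambda$) are pairwise disjoint, since $(\lambda+U)\cap(\lambda'+U)\neq\emptyset$ would give $\lambda-\lambda'\in U-U\subseteq\Om$, forcing $\lambda=\lambda'$ by the packing hypothesis $(\Lambda-\Lambda)\cap\Om\subseteq\{0\}$. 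As $G$ is compact and $\mu(U)>0$, disjointness gives $\#\Lambda\cdot\mu(U)\le\mu(G)$, so $\#\Lambda<\infty$ and $S$ is a genuine finite sum.

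\emph{First evaluation (the packing/support side).} For distinct $\lambda,\lambda'\in\Lambda$ the difference $\lambda-\lambda'$ is a nonzero element of $\Lambda-\Lambda$, hence lies outside $\Om$ by hypothesis; since $\supp f\subseteq\Om$ this gives $f(\lambda-\lambda')=0$. Only the diagonal survives, so
\[
S=\sum_{\lambda\in\Lambda} f(0)=\#\Lambda\cdot f(0).
\]

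\emph{Second evaluation (the spectral side).} Here I would use the stated characterization of positive definiteness on the compact group $G$ (Bochner's theorem): $f$ admits an absolutely convergent Fourier expansion $f=\sum_{\gamma\in\ft{G}} a_\gamma\,\gamma$ with nonnegative coefficients $a_\gamma\ge 0$ and $\sum_\gamma a_\gamma=f(0)<\infty$. Using $\gamma(\lambda-\lambda')=\gamma(\lambda)\overline{\gamma(\lambda')}$ and interchanging the order of summation (legitimate by finiteness of $\Lambda$ together with absolute summability of the $a_\gamma$),
\[
S=\sum_{\gamma\in\ft{G}} a_\gamma\,\Abs{\sum_{\lambda\in\Lambda}\gamma(\lambda)}^{2}.
\]
Every summand is nonnegative, so discarding all but the trivial character $\gamma_0\equiv 1$ yields the lower bound $S\ge a_{\gamma_0}\,(\#\Lambda)^2$. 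Integrating the expansion termwise and using $\int_G\gamma\,d\mu=\mu(G)$ for $\gamma=\gamma_0$ and $0$ otherwise identifies $a_{\gamma_0}=\int_G f/\mu(G)$.

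Comparing the two evaluations gives $\#\Lambda\cdot f(0)=S\ge(\#\Lambda)^2\int_G f/\mu(G)$, and dividing by $\#\Lambda$ delivers $\int_G f\le(\mu(G)/\#\Lambda)\,f(0)$, which is exactly \eqref{bound-from-diff}. The conceptual core is the double evaluation of $S$; the points needing care — finiteness of $\Lambda$ and the justification of the termwise integration and the interchange of summation — are precisely the routine facts supplied by compactness of $G$ and the absolute convergence in Bochner's theorem. I would therefore expect no serious obstacle beyond setting up this Fourier machinery cleanly; the only genuinely substantive input is the observation that the packing condition annihilates every off-diagonal term of $S$.
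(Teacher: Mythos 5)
Your argument is correct and is essentially the paper's proof in disguise: your quantity $S$ is exactly $F(0)$ for the auxiliary positive definite function $F=f*\delta_{\Lambda}*\delta_{-\Lambda}$ that the paper constructs, your first evaluation is the paper's identity $F(0)=\#\Lambda\, f(0)$ coming from the packing condition, and your Bochner-side lower bound $S\ge(\#\Lambda)^2\int_G f/\mu(G)$ is precisely what the paper obtains by combining $\int_G F=(\#\Lambda)^2\int_G f$ with the trivial estimate $\int_G F\le F(0)\,\mu(G)$. The only (harmless) difference is that you justify this last inequality on the Fourier side, by discarding the nonnegative terms $a_\gamma\bigl|\sum_{\lambda}\gamma(\lambda)\bigr|^2$ for $\gamma\ne\gamma_0$, whereas the paper works on the physical side from $|F(x)|\le F(0)$; your explicit check that $\Lambda$ must be finite fills in what the paper leaves as a bare parenthetical remark.
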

(Observe that the conditions imply that $\Lambda$ is finite.)

\begin{proof} Define $F:G\to\CC$ by
$$
F(x) = \sum_{\lambda,\mu\in\Lambda}f(x+\lambda-\mu).
$$
In other words $F = f*\delta_\Lambda*\delta_{-\Lambda}$, where
$\delta_A$ denotes the finite measure on $G$ that assigns a unit
mass to each point of the finite set $A$. It follows that $\ft{F}
= \ft{f}\Abs{\ft{\delta_\Lambda}}^2 \ge 0$ so that $F$ is
continuous and positive definite. Moreover, we also have
\begin{equation}
\supp F \subseteq \supp f + (\Lambda-\Lambda)
  \subseteq \Omega+(\Lambda-\Lambda)
\end{equation}
and
\begin{equation}\label{Ff-0}
F(0) = \#  \Lambda f(0),
\end{equation}
since $\Omega \cap (\Lambda-\Lambda) \subseteq \Set{0}$. Finally
\begin{equation}\label{Ff-1}
\int_G F = \#  \Lambda^2 \int_G f.
\end{equation}
Applying the trivial upper bound $\int_G F \le F(0)
\mu(\Omega+(\Lambda-\Lambda))$ to the positive definite function
$F$ and using \eref{Ff-0} and \eref{Ff-1} we get
\begin{equation}\label{bound}
\int_G f \le {\mu(\Omega+(\Lambda-\Lambda)) \over \#  \Lambda}
f(0).
\end{equation}
Estimating trivially $\mu(\Omega+(\Lambda-\Lambda))$ from above by
$\mu(G)$ we obtain the required ${\cal T}_G(\Omega) \le \mu(G) /
\#
 \Lambda$.
\end{proof}

\begin{corollary}\label{cor:packing}
Let $G$ be a compact abelian group and suppose $\Omega,H,\Lambda
\subseteq G$, $H+\Lambda\le G$ is a packing at level $1$, that
$\Omega \subseteq H-H$ and that $f\in\FF(\Om)$. Then
\eqref{bound-from-diff} holds.

In particular, if $H+\Lambda=G$ is a tiling, we have
\begin{equation}\label{bound-for-tiles}
{\cal T}_G(\Omega) \le \mu(H).
\end{equation}
\end{corollary}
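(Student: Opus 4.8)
The plan is to derive the Corollary directly from Theorem \ref{th:diff} by converting the packing hypothesis into the difference-set condition required there. First I would recall the observation recorded in \S\ref{sec:tiling}: the set $H$ packs $G$ with $\Lambda$ (i.e. $H+\Lambda\le G$ at level $1$) precisely when $(H-H)\cap(\Lambda-\Lambda)=\Set{0}$. Feeding in the assumed inclusion $\Omega\subseteq H-H$ then gives
\[
(\Lambda-\Lambda)\cap\Omega \;\subseteq\; (\Lambda-\Lambda)\cap(H-H) \;=\; \Set{0},
\]
which is exactly the hypothesis $(\Lambda-\Lambda)\cap\Omega\subseteq\Set{0}$ of Theorem \ref{th:diff}.

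Next I would check the remaining hypotheses. Since $f\in\FF(\Om)$, the function $f$ is continuous, positive definite, and supported in the open $0$-symmetric set $\Omega$; as $G$ is compact, continuity forces $f\in L^1(G)$. Thus Theorem \ref{th:diff} applies verbatim and yields
\[
\int_G f \;\le\; \frac{\mu(G)}{\#\Lambda}\,f(0),
\]
which is \eqref{bound-from-diff}. (As already noted for the Theorem, the open $0$-symmetric $\Omega$ contains a neighbourhood of $0$ that meets $\Lambda-\Lambda$ only at $0$, so by compactness of $G$ the set $\Lambda$ is finite and the quotient $\mu(G)/\#\Lambda$ is well defined.)

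For the final assertion I would evaluate $\mu(G)/\#\Lambda$ under the tiling hypothesis $H+\Lambda=G$. Integrating the tiling identity $\sum_{\lambda\in\Lambda}\chi_H(x-\lambda)=1$ (valid for a.a. $x$) over $G$ and using translation invariance of the Haar measure gives
\[
\mu(G) \;=\; \sum_{\lambda\in\Lambda}\mu(H+\lambda) \;=\; \#\Lambda\cdot\mu(H),
\]
so that $\mu(G)/\#\Lambda=\mu(H)$. Substituting this into \eqref{bound-from-diff} produces ${\cal T}_G(\Omega)\le\mu(H)$, i.e. \eqref{bound-for-tiles}.

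Because each step is just an application of a fact already established, I do not anticipate a real obstacle here; the two points that merit a sentence of care are the finiteness of $\Lambda$ (from compactness of $G$ together with $0\in{\rm int}\,\Omega$) and the measure identity $\mu(G)=\#\Lambda\cdot\mu(H)$ in the tiling case, both of which are routine.
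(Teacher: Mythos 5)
Your proof is correct and follows essentially the same route as the paper: translate the packing hypothesis into $(H-H)\cap(\Lambda-\Lambda)=\{0\}$, intersect with $\Omega\subseteq H-H$ to invoke Theorem \ref{th:diff}, and in the tiling case compute $\mu(G)/\#\Lambda=\mu(H)$. The paper states the last identity without proof; your integration of the tiling identity is exactly the standard justification, so there is nothing to add.
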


\begin{proof}
Since $H+\Lambda \le G$ it follows that $(H-H) \cap
(\Lambda-\Lambda) = \Set{0}$. Since $\Omega \subseteq H-H$ by
assumption it follows that $\Omega$ and $\Lambda-\Lambda$ have at
most $0$ in common. Theorem \ref{th:diff} therefore applies and
gives the result. If $H+\Lambda=G$ then $\mu(G)/\#
\Lambda=\Abs{H}$ and this proves \eref{bound-for-tiles}.
\end{proof}

A partial extension of the result to the non-compact case was also
worked out in \cite{kolountzakis:groups}. However, it used the
notion of u.a.u.d. which then restricted considerations to
classical groups only.

\begin{theorem}[Kolountzakis-R\'ev\'esz]\label{th:diff-infinite}
Suppose that $G$ is one of the groups $\RR^d$ or $\ZZ^d$, that
$\Lambda \subseteq G$ is a set of uniform asymptotic upper density
$\rho>0$, and $\Omega\subseteq G$ is a $0$-symmetric open set such
that $\Omega \cap (\Lambda-\Lambda) \subseteq \Set{0}$. Let also
$f\in L^1(G)$ be a continuous positive definite function on $G$
whose support is a compact set contained in $\Omega$. Then
\begin{equation}\label{bound-from-diff-infinite}
\int_G f(x)\,dx \le {1 \over \rho} f(0).
\end{equation}
In other words ${\cal T}_G(\Omega) \le 1/\rho$.
\end{theorem}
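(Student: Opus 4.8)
The plan is to deduce the non-compact estimate from the already-established compact case, Theorem~\ref{th:diff}, by \emph{folding} $\RR^d$ (respectively $\ZZ^d$) onto a large torus $(\RR/L\ZZ)^d$ (respectively a finite quotient $(\ZZ/L\ZZ)^d$). The reason a direct repetition of the compact argument fails is instructive and motivates the folding: forming $F=f*\delta_{\Lambda'}*\delta_{-\Lambda'}$ for a finite $\Lambda'\subset\Lambda$ and applying the trivial bound leads only to $\int_G f\le \frac{\mu(S+(\Lambda'-\Lambda'))}{\#\Lambda'}\,f(0)$, and on $\RR^d$ the difference set $\Lambda'-\Lambda'$ genuinely doubles the occupied window, costing a spurious factor $2^d$ (already visible for $G=\RR$, $\Lambda=\ZZ$). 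On a compact group this loss disappears, because $\Om+(\Lambda-\Lambda)$ simply wraps around and is absorbed into $\mu(G)$; this is exactly the feature I want to exploit.

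So, fix a continuous positive definite $f$ with $S:=\supp f$ compact in $\Om$, and choose $M$ with $\Om\cup S\subseteq(-M,M)^d$. Using the defining formula \eqref{RUNdensity} of $\overline D^{\#}$ that realizes the upper density $\rho$, I obtain side lengths $N_k\to\infty$ and translates $x_k$ with $\#\big(\Lambda\cap(x_k+[0,N_k)^d)\big)\ge(\rho-\e_k)N_k^d$, where $\e_k\to0$; since both the density hypothesis and the separation condition $(\Lambda-\Lambda)\cap\Om\subseteq\{0\}$ are translation invariant, I may replace $\Lambda$ by $\Lambda-x_k$ and assume $x_k=0$. Now I fold onto $\TT_k:=(\RR/(N_k+2M)\ZZ)^d$ (for $\ZZ^d$, onto the corresponding finite group), using a buffer of width $M$ on each side of the window so that the side of the fundamental cell exceeds $N_k+2M$. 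Let $\Lambda_k$ be the image of $\Lambda\cap[0,N_k)^d$ and view $f$ as a function on $\TT_k$: since $S\subseteq(-M,M)^d$ sits in one cell this is unambiguous, $\int_{\TT_k}f=\int_{\RR^d}f$ and $f(0)$ is unchanged, while the torus Fourier coefficients of $f$ are the samples $(N_k+2M)^{-d}\ft{f}(\cdot)$ of the nonnegative $\RR^d$-transform, so $f$ remains continuous and positive definite on $\TT_k$.

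The one delicate point, which I regard as the crux, is that folding could create spurious short differences: two points of $\Lambda\cap[0,N_k)^d$ near opposite faces become close on $\TT_k$. This is precisely where the buffer is used. A nonzero genuine difference $\lambda-\mu\in(-N_k,N_k)^d$ whose reduction modulo $(N_k+2M)\ZZ^d$ changes some coordinate must move that coordinate to absolute value exceeding $2M$, placing the reduction outside $(-M,M)^d\supseteq\Om$; and a difference whose reduction changes no coordinate equals $\lambda-\mu$, which meets $\Om$ only at $0$ by hypothesis. Hence $(\Lambda_k-\Lambda_k)\cap\Om_k\subseteq\{0\}$ on $\TT_k$, where $\Om_k$ is the open symmetric image of $\Om$. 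Theorem~\ref{th:diff} now applies on the compact group $\TT_k$ and, since the ratio $\mu(G)/\#\Lambda$ does not depend on the normalization of Haar measure, yields
\[
\int_{\RR^d} f=\int_{\TT_k} f\ \le\ \frac{\mu(\TT_k)}{\#\Lambda_k}\,f(0)\ \le\ \frac{(N_k+2M)^d}{(\rho-\e_k)N_k^d}\,f(0).
\]
Letting $k\to\infty$ the prefactor tends to $1/\rho$, because $N_k\to\infty$ and $\e_k\to0$, so $\int_{\RR^d} f\le\rho^{-1}f(0)$, which is the assertion; taking the supremum over all admissible $f$ and invoking Theorem~\ref{th:equivalences} gives $\Tu_G(\Om)\le1/\rho$. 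The only routine checks left are that the quotient map is injective on $\Lambda\cap[0,N_k)^d$ and that the boundary shell of the window carries $O(N_k^{d-1})$ points, so the buffer does not disturb the density limit.
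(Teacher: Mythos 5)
Your folding argument is correct, and it is a genuinely different route from the one the paper relies on. The paper does not reprove Theorem \ref{th:diff-infinite} here; it cites \cite{kolountzakis:groups}, and the mechanism it actually develops (inside the proof of the more general Theorem \ref{th:packingth}) is: stay in the non-compact group, form $F=f*\delta_{\Lambda'}*\delta_{-\Lambda'}$ from the $\ge(\rho-\e)N^d$ points of $\Lambda$ in a well-chosen window, observe that $\supp F$ sits in a box of roughly doubled side, and then beat the trivial bound for that box by the sharp estimate $\Tu_{\ZZ^k}(Q_L)\le (L+2)^k$ (Lemma \ref{l:TGstandard}, quoting formula (26) of \cite{kolountzakis:groups}); it is that estimate, not the trivial one, which removes the factor $2^d$ you correctly identify as the obstruction. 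You remove it instead by periodizing $f$, $\Omega$ and the windowed part of $\Lambda$ onto a torus of side $N_k+2M$ and invoking the compact-group Theorem \ref{th:diff} directly, where the trivial bound $\mu(\TT_k)$ already suffices. Your buffer computation showing that no spurious short differences are created (for $n\ne 0$ some coordinate of $\lambda-\mu+(N_k+2M)n$ exceeds $2M$ in absolute value, hence leaves $(-M,M)^d\supseteq\Omega$) is exactly right, as are the periodization facts ($f$ unchanged at $0$, integral preserved, nonnegative sampled Fourier coefficients, injectivity of the quotient map on the window). Your approach is more self-contained for $\RR^d$ and $\ZZ^d$; what the paper's heavier machinery buys is applicability to a general LCA group, where there is no obvious cocompact lattice to quotient by and one must instead pass through compactly generated open subgroups.

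One small point you should repair: you choose $M$ with $\Omega\cup S\subseteq(-M,M)^d$, but the theorem does not assume $\Omega$ bounded. The fix is immediate --- pick $M$ with $S=\supp f\subseteq(-M,M)^d$ (possible since $S$ is compact) and replace $\Omega$ by $\Omega\cap(-M,M)^d$, which is still open and $0$-symmetric, still contains $\supp f$, and inherits the condition $\Omega\cap(\Lambda-\Lambda)\subseteq\{0\}$ --- but as written the first line of your construction is not licensed by the hypotheses.
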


For sharpness and examples we refer to \cite{kolountzakis:groups}.
Note that some parts of the proof in \cite{kolountzakis:groups}
for this theorem will be used even in the proof for our more
general result, see the end of Lemma \ref{l:TGstandard}.

\subsection{Bounds from packing in general LCA groups}

Now we have ready a notion of u.a.u.d. as defined in \S
\ref{sec:density}. With this notion, we have the following general
version of the above particular results.

\begin{theorem}\label{th:packingth} Let $\Omega\subset G$ be a
$0$-symmetric open neighborhood of $0$ and $\Lambda\subset G$ be a
subset satisfying the ''packing-type condition"
$\Omega\cap(\Lambda-\Lambda)=\{0\}$. If
$\rho:=\overline{D}^{\#}(\Lambda)>0$, then we have $\Tu_G(\Omega)
\le 1/\rho$.
\end{theorem}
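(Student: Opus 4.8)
The plan is to reduce to a convenient subclass and then to \emph{fold} the problem onto a large compact quotient, so as to invoke the already-established compact-group bound of Theorem \ref{th:diff}. First I would use Theorem \ref{th:equivalences} to replace the supremum defining $\Tu_G(\Omega)$ by one over the class $\FF(\Omega)$ of \emph{continuous} positive definite $f$ whose support $K:=\supp f$ is a \emph{compact} subset of $\Omega$; it then suffices to prove $\int_G f\le f(0)/\rho$ for each such $f$. The tempting first move is the device used in the compact case: for a large set $V$ put $L:=\Lambda\cap V$ and form $F:=f*\delta_L*\delta_{-L}$, which is continuous and positive definite with $F(0)=\#L\cdot f(0)$ and $\int_G F=(\#L)^2\int_G f$ (the packing condition $\Omega\cap(\Lambda-\Lambda)=\{0\}$ kills all cross terms in $F(0)$), while $\supp F\subseteq K+(L-L)$. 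The trivial bound applied to $F$ yields $\int_G f\le \big(\mu(K+L-L)/\#L\big)\,f(0)$.

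Here lies the main obstacle, and it is exactly why the compact argument does not transfer verbatim. On a non-compact group the difference set $L-L$ roughly \emph{doubles} the linear size of $V$, so that (in the Euclidean model) $\mu(K+L-L)\approx\mu(V-V)$ is about $2^{d}$ times $\mu(V)\approx \#L/\rho$; the naive bound therefore delivers only $\Tu_G(\Omega)\le (2^{d}+o(1))/\rho$, which is too weak. In the compact case this loss is invisible, because there $\mu(\supp F)$ is automatically capped by $\mu(G)$ (see \eqref{bound-from-diff}) and no genuine doubling can occur. Thus the difference-set construction must be avoided, and the cap provided by compactness must be recovered another way.

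The resolution I would pursue is periodization. Choose a closed, cocompact subgroup $\Gamma\subset G$ (for $\RR^d$ take $\Gamma=R\ZZ^d$, for $\ZZ^d$ take $\Gamma=N\ZZ^d$, and in general invoke the structure theory of LCA groups) so large that $\Omega$ injects into the compact quotient $G_0:=G/\Gamma$. Then $\bar f(\dot x):=\sum_{\gamma\in\Gamma}f(x+\gamma)$ is a finite, non-overlapping sum giving a continuous positive definite function on $G_0$ with $\bar f(\dot 0)=f(0)$ and $\int_{G_0}\bar f=\int_G f$. Letting $\bar\Lambda$ be the image in $G_0$ of the points of $\Lambda$ in a fundamental domain of $\Gamma$, pruned in a thin boundary layer so that the packing descends to $\bar\Omega\cap(\bar\Lambda-\bar\Lambda)\subseteq\{0\}$, Theorem \ref{th:diff} applies on $G_0$ and gives $\int_{G_0}\bar f\le \big(\mu(G_0)/\#\bar\Lambda\big)\,\bar f(\dot 0)$; transporting back, $\int_G f\le \big(\mu(G_0)/\#\bar\Lambda\big)\,f(0)$. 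Since the support bound inside Theorem \ref{th:diff} is capped by $\mu(G_0)$, the doubling of the previous paragraph now costs nothing.

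It remains to pass to the limit along an increasing (F\o lner) net of such quotients. The boundary pruning discards only a vanishing proportion of lattice points, so $\#\bar\Lambda/\mu(G_0)\to\overline{D}^{\#}(\Lambda)=\rho$; this is precisely what the $\inf_{C}\sup_{V}\,\#(\Lambda\cap V)/\mu(C+V)$ form of the density guarantees, the fattening by a fixed compact $C$ washing out exactly the edge effects, in the spirit of Corollary \ref{cor:tilingdens} and Proposition \ref{prop:packdens}. Hence $\mu(G_0)/\#\bar\Lambda\to 1/\rho$ and $\int_G f\le f(0)/\rho$, as required. The genuinely technical point—where I expect the heaviest lifting, and where the $\RR^d$ and $\ZZ^d$ estimates behind Theorem \ref{th:diff-infinite} are reused—is the simultaneous control of two competing requirements in the abstract setting: realizing compact quotients $G_0$ (via the structure theorem and an approximation argument) whose normalized counts $\#\bar\Lambda/\mu(G_0)$ converge to the abstract upper density $\rho$, while still preserving the packing condition $\bar\Omega\cap(\bar\Lambda-\bar\Lambda)=\{0\}$ after folding.
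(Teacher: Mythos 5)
Your overall strategy --- periodize $f$ and fold $\Lambda$ onto a compact quotient so that Theorem \ref{th:diff} can be applied there --- is genuinely different from the paper's. Your diagnosis that the naive difference-set construction appears to lose a factor $2^d$ is a fair worry, but note that the paper in fact \emph{keeps} that construction and recovers the factor another way: it restricts to the open subgroup $G_0$ generated by a compact neighborhood of $\supp f-\supp f$, which by Rudin's structure lemma is an extension of a compact group by $K\cong\ZZ^k$; it builds box-like sets $V_N$ (unions of $(2N+1)^k$ translates of a Borel fundamental domain $E$) and proves directly (Lemma \ref{l:TGstandard}) that $\Tu_{G_0}(V_N)\le(N+2+s)^k|E|$, i.e.\ roughly $|V_N|/2^k$; then, forming $F=f*\delta_{\Lambda'}*\delta_{-\Lambda'}$ with $\Lambda'=\Lambda\cap(V_N+z)$ supported in $V_{2N+s}$ and comparing the resulting lower bound for $\Tu_{G_0}(V_{2N+s})$ with this upper bound, the two factors of $2^k$ cancel as $N\to\infty$. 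So the ``doubling loss'' you propose to avoid is actually absorbed, not avoided, in the paper.

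Two steps of your plan have genuine gaps. First, a general LCA group need not possess a closed cocompact subgroup $\Gamma$ avoiding a prescribed neighborhood of $0$: in $\QQ_p$ the proper closed subgroups are the compact groups $p^n\ZZ_p$ and every quotient by one of them is infinite discrete, so no folding onto a compact quotient is available until you first replace $G$ by the open, compactly generated subgroup generated by a compact neighborhood of $\supp f$ and apply the structure theorem there; this preliminary reduction, which you omit, is precisely where the paper spends its structural effort. Second, and more seriously, the claim that $\#\bar\Lambda/\mu(G_0)\to\rho$ after ``boundary pruning'' does not follow from $\overline{D}^{\#}(\Lambda)=\rho$. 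This is an \emph{upper} density of the form $\inf_{C}\sup_{V}$, so it only guarantees that \emph{some} window $V$, in some location, captures at least $(\rho-\e)\mu(C+V)$ points; a fixed fundamental domain of your chosen $\Gamma$ may capture far fewer, and the deficit is not an edge effect that grows slower than the volume. Indeed the paper's remark after Lemma \ref{l:subgrouplemma} exhibits $\Lambda\subset\ZZ^2$ of upper density $1$ whose intersection with every coset of $\ZZ\times\{0\}$ is finite, showing that density does not localize to subgroups or their cosets. You must therefore insert an averaging/translation argument (the role of Lemma \ref{l:translationlemma} in the paper) that locates a translate of your fundamental domain containing at least $(\rho-\e)$ times its measure in points of $\Lambda$, and fold only that translate. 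With those two repairs your periodization route should go through; as written, the density-transfer step would fail.
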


\begin{proof}
Let $\e>0$ be fixed small, but arbitrary. By Theorem
\ref{th:equivalences}, there exists $f\in\FF(\Om)$ with $\int_G f
> \Tu_G(\Om)-\e$. Denote $S:=\supp f$, which is a compact subset
of $\Om$ in view of $f\in\FF(\Om)$.

In the following we consider a compact, $0$-symmetric neighborhood
of $0$ which we denote by $W$. We require $W$ to be the closure of
a $0$-symmetric open subset $O$ containing $S-S$ in it. (Such a
compact set exists: by continuity of the group operation, the
compact subset $S\times S$ is mapped to a compact set, i.e. $S-S$
is compact, and then for any symmetric, open neighborhood Q of $0$
with compact closure $\overline{Q}$ choosing $O:=(S-S)+Q$,
$W:=(S-S)+\overline{Q}$ suffices.)

Let us consider the subgroup $G_0$ of $G$, generated by $W$. Here
we repeat the construction on \cite[p. 52]{rudin:groups}. First,
by \cite[Lemma 2.4.2]{rudin:groups}, $\langle W \rangle =G_0$
implies that there exists a closed subgroup $K\le G_0$ which is
isomorphic to $\ZZ^k$ with some natural number $k$ and satisfies
$W\cap K=\{0\}$, so that $H:=G_0/K$ is then compact. Let $\phi$ be
the natural homomorphism (projection) of $G_0$ onto $H$.

Because $S-S\subset {\rm int} W$, there exists an open
neighborhood $X_1$ of $S$ such that $X_1-X_1\subset W$, whence
$\phi(x)-\phi(y)=0\in H$ with $x,y\in X_1$ would imply $x-y\in
{\rm ker} \phi = K$, i.e. $x-y \in K\cap W = \{0\}$ and thus
$x=y$. In other words, $\phi$ is a homeomorphism on $X_1$, and
$Y_1:=\phi(X_1)\subset H$ is open. By compactness of $H$, finitely
many translates of $Y_1$, say $Y_1,Y_2,\dots,Y_r$ will cover $H$,
and there are open subsets $X_i$ of $G_0$ with compact closure
such that $\phi$ maps $X_i$ onto $Y_i$ homeomorphically for each
$i=1,\dots,r$. If $Y'_{1}:=Y_{1}$, $Y'_{i}:=Y_{i}\setminus
(\cup_{j=1}^{i-1} Y_j)$ ($i=2,\dots,r$) and
$X_i':=X_i\cap\phi^{-1}(Y_i')$ ($i=1,\dots,r$), then
$E:=\cup_{i=1}^r X'_i$ is a Borel set in $G_0$ with compact
closure, $\phi$ is one-to-one on $E$, and $\phi(E)=H$, i.e., each
$x\in G_0$ can be uniquely represented as $x=e+n$, with $e\in E$
and $n\in K$.

In the following we put $\|n\|:=\max_{1\le j \le k} |n_j|$, where
$(n_1,\dots,n_k)\in \ZZ^k$ is the element corresponding to $n\in
K$ under the fixed isomorphism from $K$ to $\ZZ^k$. Note also that
$S\subset X_1=X_1'\subset E$ and that $\overline{E}$ is compact.
Hence also $E+E-E-E$ has compact closure, and the discrete set $K$
can intersect it only in finitely many points. So we put
$s:=\max\{\|n\|~:~ n\in (E+E-E-E)\cap K\}$, which is finite. Next
we define
\begin{equation}\label{VNdef}
V_N:=\cup \left\{E+n~:~n\in K, \|n\|\le N \right\}\qquad
(N\in\NN).
\end{equation}
Note that $|V_N|=(2N+1)^k|E|$ for all $N\in\NN$, and the $V_N$ are
Borel sets with compact closure. Let $N,M\in \NN$, and $x=e+n$,
$y=f+m$ be the decomposition of two elements $x\in V_N$ and $y\in
V_M$ in terms of $E+K$, i.e. $e,f\in E$ and $n,m\in K$. Then
$x+y=e+f+n+m=g+p+n+m$, where $e+f$ has the standard decomposition
$g+p$, and so $p=e+f-g\in (E+E-E)$, therefore in $(E+E-E)\cap K$,
and we find $\|p\|\le s$. In all, we find $x+y\in E+q$, where
$q:=p+n+m$ satisfies $q\le N+M+s$, and so $x+y\in V_{N+M+s}$. It
follows that $V_N+V_M\subset V_{N+M+s}$.

\begin{lemma}\label{l:TGstandard}
With the above notations we have $\Tu_{G_0}(V_N)\le (N+2+s)^k|E|$
for arbitrary $N\in\NN$.
\end{lemma}
\begin{proof} Consider again the natural homeomorphism (projection)
$\phi:G_0\rightarrow G_0/K=:H$. \cite[Proposition
3]{kolountzakis:groups} gives
\begin{equation}\label{propositionyield}
\Tu_{G_0}(V_N)\le C \Tu_H(\phi(V_N))\Tu_K(V_N\cap K)\qquad
(C:=\frac{d\nu}{d\mu_H})
\end{equation}
with $\nu:=\mu_{G_0/K}\circ\pi\circ\phi^{-1}=\mu_{G_0/K}$, as
$\pi=\phi$ in our case. Note that now $G_0/K:=H$, but the Haar
measures are normalized differently: $H$, as a compact group, has
$\mu_H(H)=1$, $K\cong\ZZ^k$ has the counting measure as its
natural Haar measure, but $G_0$ has the restriction measure
$\mu_{G_0}$ inherited from $|\cdot|=\mu_G$. Therefore, following
the standard convention (as explained e.g. in \cite[\S
2.7.3]{rudin:groups}), under what convention  the above quoted
\cite[Proposition 3]{kolountzakis:groups} holds, we must take care
of $d\mu_{G_0}=d\mu_{G_0/K}d\mu_K$, which determines
$d\mu_{G_0/K}$ and hence $C$. It suffices to consider one test
function, which we chose to be $\chi_E$, the characteristic
function of $E$. We obtain
\begin{align}
|E|=\mu_{G_0}(E)&=\int_{G_0}\chi_E d\mu_{G_0} =
\int_{G_0/K}\int_K\chi_E(x+y) d\mu_K(y)d\mu_{G_0/K}([x]) \notag \\
&=\int_{G_0/K} 1 ~~d\mu_{G_0/K}([x])= \mu_{G_0/K}(G_0/K)
\end{align}
in view of $\# \{y\in K~:~ x+y \in E\}=1$ by the above unique
representation of $G_0$ as $E+K$. It follows that
\begin{equation}\label{Cvalue}
C\left(:=\frac{d\nu}{d\mu_H}\right)=\frac{\mu_{G_0/K}(G_0/K)}{\mu_H(H)}=|E|
\end{equation}
and we are led to
\begin{equation}\label{TuVN estimate}
\Tu_{G_0}(V_N)\le |E| \Tu_H(\phi(V_N))\Tu_K(V_N\cap K).
\end{equation}
Since $E\subset V_N$ and $\phi(E)=H$, $\Tu_H(\phi(V_N)=
\Tu_H(H)=1$. Let us write from now on  $Q_M:=\{m~:~ \|m\|\le M\}$.
On the other hand $V_N\cap K \subset Q_{N+s}$, because for any
$e\in E\cap K$ we necessarily have $\|e\|\le s$. These
observations yield
$$
\Tu_{G_0}(V_N)\leq |E| \cdot 1\cdot \Tu_K\left(\{m\in K~:~
\|m\|\leq N +s\}\right)=|E| \Tu_{\ZZ^k}(Q_{N+s}),
$$
by the isomorphism of $K$ and $\ZZ^k$. It remains to see that
$\Tu_{\ZZ^k}(Q_L)\leq (L+2)^k$, which follows from \cite[formula
(26)]{kolountzakis:groups} from the proof of Theorem
\ref{th:diff-infinite} in \cite{kolountzakis:groups}.
\end{proof}

\begin{lemma}\label{l:translationlemma} Let $V$ be any Borel measurable
subset of $G$ with compact closure and let $\nu$ be a Borel
measure on $G$ with $\overline{D}_{G}(\nu;\mu)=\rho> 0$. If $\e>0$
is given, then there exists $z\in G$ such that
\begin{equation}\label{VN+z}
\nu(V+z)\geq (\rho-\e)|V|.
\end{equation}
\end{lemma}

\begin{proof} Let $D:=-V$. $V$ is a Borel set with compact closure
$\overline{D}\Subset G$. So by Definition \ref{compactdensity} we
can find, according to the assumption on
$\overline{D}_{G}(\nu;\mu)=\rho$, some $Z\in \BB_0$ which satisfy
\begin{equation}\label{nuxZ}
\nu(Z)\geq (\rho-\e)|Z+\overline{D}|\geq (\rho-\e)|Z+D|.
\end{equation}
We can then write
\begin{equation}\label{intchiZt}
\int \chi_Z(t) d\nu(t) \geq (\rho-\e) |Z+D| .
\end{equation}
For $t\in Z$ $u\in D(=-V)$ also $t+u\in Z+D$, hence
$\chi_{Z+D}(t+u)=1$, and we get
\begin{equation}\label{chiZtesti}
\chi_Z(t) \leq \frac{1}{D} \int \chi_{Z+D} (t+u)\chi_D(u) d\mu(u)
\end{equation}
for all $t\in Z$. But for $t\not\in Z$ $\chi_Z(t)=0$ and the right
hand side being nonnegative, inequality \eqref{chiZtesti} holds
for all $t\in G$, hence \eqref{intchiZt} implies
\begin{align}
(\rho-\e)|Z+D| & \leq \frac1{|D|} \int\int \chi_{Z+D} (t+u)
\chi_D(u)
d\mu(u) d\nu(t) \notag \\
&=\int \chi_{Z+D} (y) \left( \frac1{|D|} \int\chi_D(y-t) d\nu(t)
\right) d\mu(y) \notag \\
&= \int \chi_{Z+D} (y) f(y) d\mu(y) \qquad\qquad \left(
\textrm{with}\quad f(y):= \frac{\nu(y-D)}{|D|} \right) \\ & =
\int_{Z+D} f d\mu .\notag
\end{align}
It follows that there exists $z\in Z+D\subset G$ satisfying $f(z)
\geq (\rho-\e)$. That is, we find $\nu(z-D) \geq (\rho-\e) |D|$ or
$\nu(z+V)=\nu(z-D)\geq (\rho-\e) |D| =(\rho-\e)|V|.$
\end{proof}

\begin{lemma}\label{l:subgrouplemma} If $\overline{D}_G(\nu;\mu)=\rho>0$
with $\mu=\mu_G$ and $\nu$ any given Borel measure on the LCA
group $G$, then for any open subgroup $G'$ of $G$, compact
$D\Subset G'$ and $\e>0$ there exist $x\in G$ and $Z\subset G'$,
$Z\in\BB_0$ so that $\nu(Z+x)\geq (\rho-\e)\mu(Z+D)$.
\end{lemma}

\begin{remark} One would be tempted to assert that on some coset
$G'+x$ of $G'$ the relative density of $\nu$ must be at least
$\rho-\e$, i.e. $\overline{D}_{G'}(\nu_x;\mu|_{G'})=\rho-\e$ with
$\nu_x(Z):=\nu(Z+x)$ for $Z\subset G'$ Borel and $x\in G$.
However, this stronger statement does not hold true. Consider e.g.
$G=\ZZ^2$, $G':=\ZZ\times\{0\}$, $A:=\{(k,l)~:~ k\in \NN, l\geq k
\}$, and $\nu:=\mu_A$ the trace of the counting measure $\mu$ of
$\ZZ^2$ on $A$. Since $A$ contains arbitrarily large squares,
$\overline{D}(\nu;\mu)=1$. (In fact, $\nu$ has a positive
asymptotic density $\delta(\nu;\mu)=1/8$, too.) However, for each
coset $G'+x=\ZZ\times\{m\}$ of $G'$ the intersection $A\cap G'$ is
only finite and $\overline{D}_{G'}(\nu_x;\mu|_{G'})=0$.
\end{remark}

\begin{proof} By condition, for $D\Subset G' \leq G$ there exists
$V\Subset G$ such that
\begin{equation}\label{muVmuVD}
\nu(V)\geq (\rho-\e)\mu(V+D).
\end{equation}
Let now $U$ be an open set containing $V+D$ and with compact
closure $\overline{U}\Subset G$. Because the cosets of $G'$ cover
$G$, we have
$$
V+D=\bigcup_{x\in G} \left( (V+D)\cap(G'+x)\right) \subset
\bigcup_{x\in G} \left( U \cap(G'+x) \right).
$$
Since both $U$ and $G'$ are open, and $V+D$ is compact, the
covering on the right hand side has a finite subcovering;
moreover, we can select all covering cosets only once, hence
arrive at a disjoint covering
$$
V+D \subset \bigcup_{j=1}^{m} U_j\qquad\qquad \left( ~U_j:=U
\cap(G'+x_j), \quad j=1,\dots,m ~\right).
$$
Take now $V_j:=U_j\cap(V+D)$. As the $U_j$ are disjoint, so are
the $V_j$; and as the $U_j$ together cover $V+D$, so do the $V_j$.
So we have the disjoint covering $V+D=\cup_{j=1}^m V_j$.
Furthermore, if $x\in (V+D)\cap(G'+x_j) \subset V+D$, it must
belong to $V_j$, for all $V_i$ with $i\ne j$ are disjoint from
$G'+x_j$ and hence $x\not\in V_i$ for $i\ne j$. Therefore all
$V_j$ are compact, in view of $V_j=U_j\cap(V+D)=(V+D)\cap
U\cap(G'+x_j)=(V+D)\cap(G'+x_j)$ because $V+D$ is compact and
$G'+x_j$ is also closed (as an open subgroup, hence its cosets,
are always closed, too.)

Next we define $W_j:=V\cap V_j$. Plainly, $W_j\Subset G$ and
disjoint, and $V=\cup_{j=1}^m W_j$. Moreover, $W_j+D=V_j$; indeed,
$W_j+D=(V\cap(G'+x_j))+D=(V+D)\cap(G'+x_j)$ since $D\subset G'$
and $G'\leq G$. So we find
\begin{equation}\label{nuVnuW}
\nu(V)=\sum_{j=1}^m \nu(W_j)
\end{equation}
and also
\begin{equation}\label{muVD}
\mu(V+D)=\sum_{j=1}^m \mu(V_j)= \sum_{j=1}^m \mu(W_j+D)=
\sum_{j=1}^m \mu(W_j-x_j+D)
\end{equation}
Collecting \eqref{nuVnuW}, \eqref{muVmuVD} and \eqref{muVD} we
conclude
\begin{equation}\label{nuW}
\sum_{j=1}^m \nu(W_j) \geq (\rho-\e) \sum_{j=1}^m \mu(W_j-x_j+D),
\end{equation}
hence for some appropriate $j\in [1,m]$ we also have $
\nu(W_j)\geq (\rho-\e) \mu (W_j-x_j+D) $. Taking $Z:=W_j-x_j$ and
$x=x_j$ concludes the proof.
\end{proof}

\noindent \emph{End of the proof of Theorem \ref{th:packingth}.}
Let now $\nu:=\delta_{\Lambda}$ be the counting measure of the
(discrete) set $\Lambda\subset G$. Then
$\overline{D}_G(\nu;\mu)=\overline{D}_G^{\textrm{\#}}(\Lambda)=\rho>0$
and Lemma \ref{l:translationlemma} applies providing some
$z:=z_N\in G$ with
\begin{equation}\label{MVN}
M:=\textrm{\#} \left(\Lambda\cap(V_N+z)\right) \geq
(\rho-\e)|V_N|.
\end{equation}
Take now
$\Lambda':=\Lambda\cap(V_N+z)=\{\lambda_m~:~m=1,\dots,M\}$. Put
$F:=f\star \delta_{\Lambda'} \star \delta_{-\Lambda'}$, i.e.
$$
F(x):=\sum_{m=1}^M \sum_{n=1}^M f(x+\lambda_m-\lambda_n),
$$
which is a positive definite continuous function supported in
$S+(V_N+z)-(V_N+z)=S+V_N-V_N = S+E-E+Q_{2N}\subset
E+E-E+Q_{2N}\subset V_{2N+s}$. Furthermore, as $S\subset G_0$,
\begin{equation}\label{intF}
\int_{G_0} F = M^2 \int_{G_0} f \geq M^2 (\Tu_G(\Omega)-\e)
\end{equation}
and
\begin{equation}\label{Fnull}
F(0)=\sum_{m=1}^M \sum_{n=1}^M f(\lambda_m-\lambda_n)=Mf(0)=M,
\end{equation}
because if $\lambda_m-\lambda_n\in S$ then $\lambda_m-\lambda_n\in
S\cap(\Lambda-\Lambda)\subset \Omega\cap (\Lambda-\Lambda)=\{0\}$
and $\lambda_m=\lambda_n$, i.e. $n=m$. By this construction we
derive that
\begin{align}\label{TVbelow}
\Tu_{G_0}(V_{2N+s}) & \geq \frac{1}{F(0)} \int_{G_0} F \geq M
(\Tu_G(\Omega)-\e)  \notag \\ &\geq (\rho-\e) (\Tu_G(\Omega)-\e)
|V_N| = (\rho-\e)(\Tu_G(\Omega)-\e) (2N+1)^k|E|.
\end{align}
On the other hand Lemma \ref{l:TGstandard} provides us
\begin{equation}\label{TVabove}
\Tu_{G_0}(V_{2N+s}) \leq (2N+s+2)^k |E|.
\end{equation}
On comparing \eqref{TVbelow} and \eqref{TVabove} we conclude
$(\rho-\e)(\Tu_G(\Omega)-\e) (2N+1)^k|E| \leq (2N+s+2)^k|E| $,
that is
$$
\Tu_G(\Omega)-\e \leq \frac{1}{\rho-\e} \left(\frac{2N+s+2}{2N+1}
\right)^k.
$$
Letting $N\to\infty$ and $\e\to 0$ gives the assertion.
\end{proof}

\begin{corollary} Suppose that $\Omega\subset G$ is an open
and symmetric set and $\Omega=H-H$, where $H$ tiles space with
$\Lambda\subset G$. Moreover, assume that $H$ has compact closure
$\overline{H}\Subset G$ and is measurable, i.e. $H\in \BB_0$. Then
$\Tu_G(\Om)= \mu(H)$.
\end{corollary}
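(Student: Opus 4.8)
The plan is to prove the two inequalities $\Tu_G(\Om)\le\mu(H)$ and $\Tu_G(\Om)\ge\mu(H)$ separately: the upper bound comes straight from the packing machinery just developed, and the lower bound from exhibiting an explicit family of competitors in $\FF(\Om)$.

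For the upper bound I would extract from the tiling hypothesis the two ingredients that feed Theorem \ref{th:packingth}. Since $H\in\BB_0$ tiles with $\Lambda$, Corollary \ref{cor:tilingdens} gives $\rho:=\overline{D}^{\#}(\Lambda)=1/\mu(H)$, and this is strictly positive because $\overline{H}\Subset G$ forces $0<\mu(H)<\infty$. Moreover a tiling is simultaneously a packing and a covering (as recorded just before Corollary \ref{cor:tilingdens}), and the packing half is, by the observation at the end of \S\ref{sec:tiling}, equivalent to $(H-H)\cap(\Lambda-\Lambda)=\{0\}$; as $\Omega=H-H$ this is exactly the packing-type condition $\Omega\cap(\Lambda-\Lambda)=\{0\}$. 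Since $\Omega=H-H$ is a $0$-symmetric open neighborhood of $0$, Theorem \ref{th:packingth} applies and yields $\Tu_G(\Om)\le 1/\rho=\mu(H)$.

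For the lower bound I would use the self-convolution of the characteristic function of $H$, approximated from inside so as to keep the support safely within the open set $\Omega$. Concretely, for a compact $C\subseteq H$ put $g_C:=\chi_C\star\widetilde{\chi_C}$, where $\widetilde{h}(x)=\overline{h(-x)}$. Then $\ft{g_C}=|\ft{\chi_C}|^2\ge 0$, and since $\chi_C\in L^2(G)$ the convolution $g_C$ lies in $C_0(G)$ (equivalently, strong continuity of translation on $L^2$ makes $x\mapsto g_C(x)$ continuous), so $g_C$ is a continuous positive definite function. One computes $g_C(x)=\mu(C\cap(C+x))$, whence $g_C(0)=\mu(C)$, $\int_G g_C=\mu(C)^2$, and $\{g_C>0\}\subseteq C-C$. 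As $C-C$ is compact and $C-C\subseteq H-H=\Omega$, the support of $g_C$ is compactly contained in $\Omega$, so $g_C\in\FF(\Om)$ and
\[
\Tu_G(\Om)\ge\frac{\int_G g_C}{g_C(0)}=\mu(C).
\]
Taking the supremum over compact $C\subseteq H$ and invoking inner regularity of the Haar measure (legitimate since $\mu(H)<\infty$) gives $\Tu_G(\Om)\ge\sup_C\mu(C)=\mu(H)$, which together with the upper bound finishes the proof.

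I expect the only genuine subtlety to be the boundary behaviour of the support in the lower bound: the full self-convolution $\chi_H\star\widetilde{\chi_H}$ has support in $\overline{H-H}$, which may touch $\partial\Omega$ and so fail to be a closed subset of the open set $\Omega$ (the same limiting phenomenon noted for the interval/triangle-function case after Theorem \ref{th:equivalences}). Replacing $H$ by compact subsets $C\subseteq H$ and passing to the supremum is exactly the device that circumvents this, trading an exact extremizer for an approximating family with ratios $\mu(C)\uparrow\mu(H)$. Everything else — positive definiteness via the nonnegative Fourier transform, continuity via $L^2\star L^2\subset C_0$, and the elementary measure computations — is routine.
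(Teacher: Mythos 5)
Your proposal is correct and follows essentially the same route as the paper: the lower bound via the self-convolutions $\chi_C\star\widetilde{\chi_C}$ of characteristic functions of compact subsets $C\Subset H$ together with inner regularity of the Haar measure, and the upper bound by feeding the packing condition $\Omega\cap(\Lambda-\Lambda)=\{0\}$ into Theorem \ref{th:packingth} with the density bound $\overline{D}^{\#}(\Lambda)\geq 1/\mu(H)$ coming from the covering half of the tiling. The only cosmetic difference is that you quote Corollary \ref{cor:tilingdens} for the exact value of the density, whereas the paper uses only Proposition \ref{prop:coverdens} for the one-sided bound it actually needs.
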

\begin{proof} First, observe that for any $A\Subset H$ we have
$f:=\chi_A*\chi_{-A} \in \FF_{\&}(\Om)$. Indeed,
$\widetilde{\chi_A}=\chi_{-A}$ because $\chi_A$ is real valued,
also $\chi_A\in L^2(G)$, and such a convolution representation
guarantees that $f\in C(G)\cap L^1(G)$ is positive definite;
furthermore, if $f(x)\ne 0$, then necessarily $x=a-a'$ with some
$a,a'\in A\subset H$, hence $\supp f \subset \Omega$.

Therefore, calculating with the admissible function $f$, we find
$\Tu_G(\Omega) \geq \int_G f/f(0) = \mu(A)^2/\mu(A) =\mu(A)$.
Since $H$ is Borel measurable, its measure can be approximated
arbitrarily closely by measures of inscribed compact sets $A$:
therefore, taking supremum over compact sets $A\Subset H$, we
obtain the lower estimate $\Tu_G(\Om)\geq \mu(H)$.

On the other hand, $H+\Lambda=G$ entails that $H$ packs with
$\Lambda$, and so an application of Theorem \ref{th:packingth}
gives $\Tu_G(\Om)\leq 1/\overline{D}^{\#}(\Lambda)$. Now we can
apply that $H$ also covers $G$ with $\Lambda$, so that Proposition
\ref{prop:coverdens} also applies, giving
$\overline{D}^{\#}(\Lambda)\geq 1/\mu(H)$. On combining the last
two inequalities, $\Tu_G(\Om)\leq \mu(H)$, whence the assertion,
follows.
\end{proof}


\noindent {\sc\small
Alfr\' ed R\' enyi Institute of Mathematics, \\
Hungarian Academy of Sciences, \\
1364 Budapest, Hungary}\\
E-mail: {\tt revesz@renyi.hu}

\end{document}